\documentclass[12pt]{article}
 \usepackage{amsfonts,latexsym,amsmath,amssymb,amsthm}
\usepackage[mathscr]{eucal}
\usepackage{graphicx,color}
\usepackage{comment}
\usepackage[hidelinks]{hyperref}
\usepackage{epstopdf}
\usepackage[margin = 2cm]{geometry}
\usepackage[inline]{enumitem}
\usepackage{mathtools}
\usepackage{cite}
\usepackage{xcolor}
\usepackage{cleveref}
\usepackage{tikz}
\usepackage{array}
\usepackage{calc}
\usepackage[normalem]{ulem}
\usepackage{stmaryrd}
\usepackage{bbm}
\usepackage{pdfpages}

\newcommand{\J}{\mathcal J}
\newcommand{\F}{\mathbb F}
\newcommand{\Ps}{\Psi}

\usepackage{setspace}
\usetikzlibrary{arrows, arrows.meta}

\newtheorem{theorem}{Theorem}
\newtheorem{lemma}{Lemma}
\newtheorem{proposition}{Proposition}

\theoremstyle{definition}
\newtheorem{definition}{Definition}
\newtheorem{assumption}{Assumption}

\newtheorem{remarkx}{Remark}

\newtheorem{examplex}{Example}
\newenvironment{remark}
  {\pushQED{\qed}\remarkx}
  {\popQED\endremarkx}
\newenvironment{example}
  {\pushQED{\qed}\examplex}
  {\popQED\endexamplex}

\DeclarePairedDelimiter{\norm}{\lVert}{\rVert}

\DeclareMathOperator{\diff}{d\!} % Differential
\DeclareMathOperator*{\esssup}{ess\,sup}

\newcommand{\suchthat}{\ifnum\currentgrouptype=16 \mathrel{}\middle|\mathrel{}\else\mid\fi}
\newcommand{\Hscr} {H}

\def\sat{\mathrm{sat}}

\makeatletter
\newcommand{\customlabel}[2]{%
   \protected@write \@auxout {}{\string \newlabel {#1}{{#2}{\thepage}{#2}{#1}{}} }%
   \hypertarget{#1}{#2}
}

\makeatother

\begin{document}

\setlist[enumerate]{label={(\alph*)}}
\setlist[enumerate, 2]{label={(\alph{enumi}-\roman*)}}
\newlist{listhypo}{enumerate}{1}
\setlist[listhypo]{label={\textup{(H\arabic*)}}, ref={\textup{(H\arabic*)}}}

\title{\bf Semi-uniform stability estimates for impedance passive systems with saturated feedback\footnotemark[1]}

\author{Swann Marx\footnotemark[2]}

\maketitle

\footnotetext[1]{LS2N, \'Ecole Centrale de Nantes \& CNRS UMR 6004, F-44000 Nantes, France. E.mail: swann.marx@ls2n.fr}
\footnotetext[2]{This work has been supported by the Agence National de la Recherche (ANR) under the ROTATION project (grant no. ANR-24-CE48-0759).}

\begin{abstract}
This article investigates the long-time behavior of (possibly infinite-dimensional) impedance passive systems under saturated output feedback and external disturbances. We assume that, in the absence of saturation and disturbances, the underlying linear output feedback exponentially stabilizes the system. Our main contribution is to show that fractional Sobolev regularity of the free output is preserved by the nonlinear feedback. More precisely, if the initial condition belongs to a suitable interpolation space associated with the linear closed-loop generator, then both the output and the state of the nonlinear closed-loop system inherit the corresponding fractional regularity. This regularity is sufficiently weak to be shared by the linear and nonlinear closed-loop systems, thereby avoiding the identification of the nonlinear generator domain. Combining this regularity result with observability estimates for the linear system yields a characterization of the asymptotic behavior of the nonlinear closed-loop system in the presence of disturbances. In particular, under the impedance passivity framework and exact observability and regularity assumptions, we establish a semi-uniform input-to-state stability property. The theory is illustrated by a multidimensional wave equation with nonlinear boundary damping.
\end{abstract}

\section{Introduction}

Passive systems constitute one of the cornerstones of modern control theory. Their intrinsic energy balance provides a natural framework for the analysis of stability and robustness, while allowing the treatment of a large variety of distributed parameter systems through abstract operator-theoretic methods. In particular, the theory of regular linear systems introduced and studied in
\cite{weiss1994regular,staffans2005well,tucsnak2014well,weiss2003get}
provides a unified framework for boundary control systems involving unbounded control and observation operators.

This paper is concerned with the long-time behavior of impedance passive systems interconnected with a static monotone feedback law and subject to external disturbances. We assume that, in the absence of disturbances and nonlinear effects, the corresponding linear output feedback exponentially stabilizes the system
\cite{liu1997locally,curtain2006exponential,curtain2019strong}; see also
\cite{weiss2000optimizability}
for related stabilizability issues. Although our motivating application is actuator saturation, our analysis applies to a broader class of monotone passive nonlinearities.

A major difficulty in the stability analysis of nonlinear passive feedback systems stems from the mismatch between the quantities naturally arising in observability estimates and those appearing in nonlinear dissipation inequalities. Indeed, exact observability is usually formulated in the $L^2$ framework, whereas monotone feedback laws naturally generate $L^1$-type dissipation terms. Bridging this gap therefore requires additional regularity of the output. The philosophy of the present paper is that fractional Sobolev regularity provides precisely this missing functional-analytic ingredient: it allows one to convert quadratic observability estimates into the improved integrability properties naturally required in the analysis of nonlinear passive feedback systems.

The main contribution of this work is to show that the fractional Sobolev regularity of the free output is preserved by monotone passive feedback interconnections. More precisely, assuming that the initial state belongs to a suitable interpolation space associated with the linear closed-loop generator, we prove that both the nonlinear output and the corresponding nonlinear state inherit the same fractional regularity. Remarkably, the required regularity is strictly weaker than the nonlinear generator domain and is therefore shared by both the linear and nonlinear closed-loop systems. Consequently, our analysis avoids any explicit characterization of the nonlinear generator and relies only on interpolation spaces naturally associated with the linear closed-loop problem.

This regularity result is of independent interest. Combined with Sobolev embeddings, it yields improved integrability properties of the nonlinear output, which in turn allow observability estimates for the underlying linear system to be transferred to the nonlinear closed-loop system. Under the sole assumption that the open-loop system is impedance passive, we consequently establish a semi-uniform input-to-state stability property with respect to external disturbances.

The derivation of ISS estimates for saturated infinite-dimensional systems is considerably more delicate than in finite dimension. Indeed, several works
\cite{jacob2020remarks,chitour2020one}
have shown that saturation nonlinearities may destroy uniform decay properties. Although asymptotic convergence is preserved, decay estimates generally depend on the size of the initial condition. Such a loss of global uniformity is a genuinely infinite-dimensional phenomenon and already appears for linear systems exhibiting non-uniform decay. This strongly motivates the search for stability notions weaker than uniform ISS, such as the semi-uniform ISS property introduced in
\cite{wakaiki2022semi}.

The stabilization of passive systems has been extensively studied over the past decades. For abstract linear systems with collocated feedback, representative references include
\cite{liu1997locally}
for bounded control operators and
\cite{curtain2006exponential}
for unbounded ones. More generally, the theory of regular linear systems
\cite{weiss1994regular,staffans2005well,tucsnak2009observation}
provides a complete framework for the well-posedness of feedback interconnections involving admissible control and observation operators.

In the nonlinear setting, stabilization under saturation constraints has been extensively investigated for particular partial differential equations, including wave equations
\cite{alabau1999stabilisation,alabau2002indirect,Alabau2012Recent,chitour2020one,chitour2019p,pierre2000strong,vanspranghe2021velocity,Zuazua1990Uniform}
and the Korteweg--de Vries equation
\cite{mcpa2017siam,parada2022global}. More generally, saturation has long been recognized as a fundamental issue in control theory
\cite{sussmann1991saturation,tarbouriech2011book_saturating,teel1992globalsaturation}. Most of these contributions rely on PDE-specific techniques such as multiplier methods, hidden regularity, or Lyapunov functionals. By contrast, the present work is formulated entirely within the abstract framework of regular linear systems and therefore applies simultaneously to a broad class of distributed parameter systems.

Our approach is also closely related to, yet fundamentally different from, two classical directions in the literature.

On the one hand, the seminal work
\cite{tataru1998boundary}
established hidden regularity properties for the boundary traces of wave equations by combining microlocal and elliptic techniques. Such results reveal additional regularity that is intrinsic to the underlying partial differential equation. In contrast, we do not establish new trace regularity results. Instead, assuming that the free output already belongs to a suitable fractional Sobolev space---a property which, in many relevant situations, follows from interpolation estimates for regular linear systems
\cite{paunonen2024admissibility,guiver2024mixed,guiver2024operator}---we prove that this regularity is preserved by monotone passive feedback laws. Consequently, our analysis is independent of the underlying partial differential equation and applies simultaneously to arbitrary regular impedance passive systems.

On the other hand, our stability analysis differs substantially from the classical energy methods in
\cite{komornik1994exact,komornik1990stabilization}; see also the convexity framework introduced in
\cite{alabau2005convexity}. Those approaches derive decay estimates directly from energy identities, multiplier techniques and convexity arguments. Here, we instead exploit the propagation of fractional Sobolev regularity to obtain improved integrability properties of the nonlinear output through Sobolev embeddings. These estimates are then combined with observability inequalities in order to transfer stability properties from the linear closed-loop system to the nonlinear one. We therefore view the present work as complementary to the classical energy approach, providing a purely abstract input--output framework for nonlinear passive feedback systems.

Among abstract semigroup approaches, the Lyapunov framework developed in
\cite{marx2018stability}
provides stability results for passive systems with bounded control operators. However, many boundary control systems fall outside this setting because the associated control and observation operators are unbounded. The present work removes this restriction by working directly at the input--output level. Rather than constructing Lyapunov functionals, our analysis relies on a preservation principle for fractional Sobolev regularity under monotone passive feedbacks. To the best of our knowledge, such a preservation result has not previously been established in the framework of regular linear systems.

The paper is organized as follows. Section~\ref{sec:passive} recalls the notions of impedance passive, well-posed and regular linear systems. Section~\ref{sec:stab} introduces the stability notions used throughout the paper. Section~\ref{sec:well-posedness} establishes the well-posedness of the nonlinear feedback interconnection using recent results from
\cite{marx2025impedance,hastir2025well}. Section~\ref{sec:GAS} contains the main contribution of the paper: the propagation of fractional Sobolev regularity through nonlinear passive feedbacks together with its consequences for the asymptotic stability and semi-uniform ISS properties of the closed-loop system. Finally, Section~\ref{sec:conclusion} discusses several perspectives, including extensions to projected passive systems on closed convex sets and the identification of interpolation spaces associated with nonlinear feedback systems.

\section{Impedance passive systems: systems nodes, well-posedness and regular systems}

\label{sec:passive}

We consider three real Hilbert spaces  $H,U,Y$ \footnote{Many results in this article can be extended to the complex case, especially the linear setting.}, where $H$ is the state space, $U$ the input space and $Y$ the output space. In the sequel, given any Hilbert space $W$, we denote by $\mathrm I$ the identity operator of this space, and by $\mathcal L(W_1,W_2)$ the set of bounded (or continuous) linear operators from $W_1$ to $W_2$, with $W_1,W_2$ two real Hilbert spaces. For any operator $A$ (possibly unbounded), we call $\rho(A)$ its resolvent set. We also define $\mathfrak U$ (resp. $\mathscr Y$) the set of functions $f$ satisfying $f\in L^2_{\mathrm{loc}}([0,\infty);U)$ (resp. $f\in L^2_{\mathrm{loc}}([0,\infty);Y)$). We call $\mathbf P_t$ the truncation operator applied to any signals $u\in L^2([0,\infty];U)$. It is defined as follows:
\begin{equation*}
\mathbf P_t u= \left\{
\begin{aligned}
&u(s),\: &\forall &s\in [0,t],\\
&0, &\forall &s >t.
\end{aligned}
\right.
\end{equation*}

As already said in the introduction, this paper is devoted to stritly output passive and impedance passive systems, which are subclasses of systems that appear in many applications.

\begin{definition}[Strictly output passive and impedance passivity]
\label{def:passive}
    Let $H,U,Y$ be three real Hilbert spaces. Suppose $U=Y$. \textit{Strictly output passive systems} satisfy the following energy inequality:
\begin{equation}
\Vert z(t)\Vert^2_H - \Vert z_0\Vert^2_H \leq 2\int_0^t \langle u(s),y(s)\rangle_U \diff s - \mu \int_0^t \Vert y(s)\Vert^2_U \diff s,
\end{equation}
where $\mu\geq 0$, and $z$ denotes the state, $u$ denotes the control and $y$ denotes the output. \textit{Impedance passive systems} satisfy the same energy inequality with $\mu=0$.
\end{definition}

A very common impedance passive system is composed by a dissipative operator $A$, a collocated output (i.e., $C=B^*$) and a dissipative feedtrough operator $D$. More details on such systems are given in \cite{curtain2019strong,curtain2006exponential,slemrod1989mcss}. It can also be proved easily that an impedance passive system with a (unitary) static output feedback is strictly output passive\footnote{It turns out that, for some choices of $\mu>0$, the impedance passive system becomes scattering passive, which is related to the external Cayley transform \cite{curtain2019strong,staffans2002passive}.}.

As illustrated in \cite{staffans2002passive}, impedance passive systems do not need to be well-posed. We will provide examples later on. Hence, we need another concept to deal with general situations. Therefore, we recall the definition of a system node, which is a useful notion for systems that are not well-posed. We borrow this notion from \cite[Definition 4.7.2]{staffans2005well}, but to recall it, we need to define additional spaces and related concepts. Set $\mathbb T$ as the semigroup generated by an operator $A:D(A)\subset H\rightarrow H$ that is densely defined. We call $H_{-1}$ the completion of the space $H$ with respect to the norm $\Vert (s\mathrm I-A)^{-1} z\Vert_H$ with $z\in H$ and $s\in\rho(A)$, and $H_1$ the space $D(A)$ equipped with the norm $\Vert (s\mathrm I-A) z\Vert_H$ with $z\in D(A)$ and $s\in \rho(A)$. The choice of $s$ in the latter definitions of $H_{-1}$ and $H_1$ is not important because the norms are equivalent whatever $s\in\rho(A)$ is. The operator $A$ (resp. $\mathbb T$) can be extended as an operator from $H$ to $H_{-1}$ (resp. as an operator from $H$ to $H_{-1}$), and its extension is also called $A$ (resp. $\mathbb T$) to ease the reading.

\begin{definition}[System nodes]
\label{def:systemnodes}
Let $H$, $U$, $Y$ be three (real) Hilbert spaces. A closed operator:
\begin{equation}
S:=\begin{bmatrix}
A \& B\\
C \& D
\end{bmatrix}:D(S)\subset H\times U\rightarrow H\times Y.
\end{equation}
is called a \textit{system node} on the spaces $(H,U,Y)$ if it has the following properties:
\begin{itemize}
    \item[1.] The operator $A: D(A)\subset H\rightarrow H$ defined by $A\&B\left[
    z\:\: 0
    \right]^\top$ for $z\in D(A):=\lbrace z\in H\mid \left[z \: \: 0\right]^\top \in D(S)\rbrace$ generates a strongly continuous semigroup $\mathbb T$ on $H$.
    \item[2.] The operator $A\& B$ (with domain $D(S)$) can be extended to an operator $\left[
    A \: \: B\right]\in \mathcal L(H\times U,H_{-1})$.
    \item[3.] The domain $D(S)$ is defined by $D(S):=\lbrace (z,u)\in H\times U\mid Az + Bu\in H\rbrace$, with $A,B$ defined in Item (2). 
\end{itemize}
\end{definition}

In other words, if $u=0$ and if there is no output, we have a well-posed system, since the operator $A$ generates a strongly continuous semigroup $\mathbb T$. From the definition of system nodes, one can define the output operator as follows:
$$
Cz = C\& D\left[z \: \: 0\right],\: \forall z\in D(A). 
$$
Once we can define the output operator $C\in\mathcal L(H_1,Y)$ and the input operator $B\in\mathcal L(U,H_{-1})$, we can also define the \textit{transfer function}. To do so, we introduce some notation with respect to operator-valued functions; this notation is borrowed from the nice paper \cite{guiver2017transfer}. Transfer functions belong to this class. For any open set $\Omega\subset \mathbb C$, the set of all holomorphic functions $\Omega \rightarrow \mathcal L(U,Y)$ is denoted by $\mathcal H(\Omega,\mathcal L(U,Y))$. When $\Omega=\mathbb C_\alpha$, where $\mathbb C_\alpha:=\lbrace s\in\mathbb C\mid \mathfrak R_e(s) > \alpha\rbrace$, then we use the following notation 
$$\mathcal H_\alpha(\mathcal L(U,Y)):=\mathcal H(\mathbb C_\alpha, \mathcal L(U,Y)).$$ 

We denote by $\mathcal H^*_\alpha(\mathcal L(U,Y))$ the set of all $\mathcal L(U,Y)$-valued functions which are holomorphic on $\mathbb C_\alpha$, with the exception of some isolated points, typically poles and essential singularities. We denote by $\mathcal H^\infty_\alpha(\mathcal L(U,Y))$ the set of all functions with value $\mathcal L(U,Y)$ and that are bounded. It can be proved that $\mathcal H^\infty_\alpha(\mathcal L(U,Y))\subset \mathcal H_\alpha(\mathcal L(U,Y))$, and the set $\mathcal H^\infty_\alpha(\mathcal L(U,Y))$ is endowed with the following norm:
\begin{equation}
\Vert \mathbf H\Vert_{\mathcal H^\infty(\mathcal L(U,Y))}:=\sup_{s\in \mathbb C_\alpha} \Vert \mathbf H(s)\Vert_{\mathcal L(U,Y)}.
\end{equation}
Now, we can define the transfer function of a system node.
\begin{definition}
The transfer function $\mathbf H$ of a system node is defined as
$$
\mathbf H(s) - \mathbf H(\beta) = C\left[(s\mathrm I_H-A)^{-1}-(\beta \mathrm I_H-A)^{-1}\right]B,
$$
for any $s,\beta\in\mathbb C_{\omega_0(\mathbb T)}$, where $\omega_0(\mathbb T)$ is the growth bound of the semigroup $\mathbb T$. 
\end{definition}

We are now in position to give a definition of what we mean by well-posed systems.

\begin{definition}[Well-posed linear systems]
A time-invariant and well-posed linear system $\Sigma=(\Sigma_t)_{t\geq 0}$ is given by the set of operators $(\mathbb T_t,\Phi_t,\Psi_t,\mathbb F_t)_{t\geq 0}$ that are defined as follows: 
\begin{itemize}
    \item[1.] The semigroup operator $\mathbb T=(\mathbb T_t)_{t\geq 0}\in \mathcal L(H)$
    \item[2.] The input operator $\Phi_t\in\mathcal L(\mathfrak U,H)$.
    \item[3.] The output operator $\Psi_t \in \mathcal{L}(H,\mathscr Y)$.
    \item[4.] The input-output operator $\mathbb F_t\in \mathcal L(\mathfrak U,\mathscr Y)$. 
\end{itemize}
\end{definition}
Such a system is called well-posed because all the operators describing it are bounded yielding the continuous dependence of the state and the output with respect to the initial state and the input. From these operators, one can define the evolution in time of $z\in C([0,\infty);H)$ (which are usually called \textit{mild solutions}) and $y\in \mathscr Y$ (which are usually called \textit{mild outputs}) by the following identities
\begin{equation}
\begin{bmatrix}
z(t)\\\mathbf P_t y
\end{bmatrix} = \begin{bmatrix}
\mathbb T_t & \Phi_t\\
\Psi_t & \mathbb F_t
\end{bmatrix}\begin{bmatrix}
z_0 \\ \mathbf P_t u
\end{bmatrix}.
\end{equation}

More details on well-posed systems are provided in \cite{staffans2005well,tucsnak2014well}. It is now worth wondering how to relate well-posed systems and system nodes. We need for this the notion of admissible input (resp. output) operators, related to the semigroup $\mathbb T$ and its generator $A$. To do so, we specify the operators $\Phi_t$ and $\Psi_t$ as follows:
\begin{equation}
\label{eq:phi}
    \Phi_tu:=\int_0^{t} \mathbb T_{t-s} Bu(s)\diff s,\: \:\Psi_t z_0 :=C\mathbb T_t z_0.
\end{equation}

\begin{definition}
Given $T>0$, we say that $B$ is an admissible input operator in time $T$ for $\mathbb T$ if there exists a positive constant $K_B$ such that the following property is satisfied for every $u\in \mathscr U$ and every $t\in [0,T]$:
$$
\Vert \Phi_t u\Vert^2_{H} \leq K_B^2 \Vert u\Vert^2_{\mathscr U}.
$$
\end{definition}
This means in particular that, for any $t\in [0,T]$, $\mathrm{Ran}\: \Phi_t = H$, i.e. the integration expressed in \eqref{eq:phi} is done in $H_{-1}$, but the result is in $H$. It is worth mentioning that the constant $K_B$ shall depend on $T$.

\begin{definition}
Given $T>0$, we say that $C$ is an admissible output  operator in time $T$ for $\mathbb T$ if there exists a positive constant $K_C$ such that the following property is satisfied for every $z_0\in H$ and every $t\in [0,T]$:
$$
\int_0^t \Vert \Psi_s z_0\Vert_Y^2 \diff s\leq K_C^2\Vert z_0\Vert_H^2.
$$
\end{definition}
Again, here, the constant $K_C$ shall depend on the time $T$. With these notions in mind, we are able to state the following characterization of a well-posed system. 
\begin{definition}
\label{ass:structure}
Let $A$ be the generator of a semigroup $\mathbb T$, $B\in\mathcal L(U,H_{-1})$ and $C\in\mathcal L(H_1,Y)$. The triple $(A,B,C)$ is well-posed if and only if the following properties are satisfied:
\begin{itemize}
    \item[1.] The operator $B$ is admissible for the semigroup $(\mathbb T_t)_{t\geq 0}$. 
    \item[2.] The operator $C$ is admissible for the semigroup $(\mathbb T_t)_{t\geq 0}$.
    \item[3.] The function $s\mapsto \Vert \mathbf H(s)\Vert_{\mathcal{L}(U,Y)}$ is bounded in some right half-plane.\footnote{In the "system theory" community, we say that the transfer function $\mathbf H$ is proper.}
\end{itemize}
\end{definition}
We need to spend few words about the third condition. Indeed, the transfer function is related to the operator $\mathbb F_t$. To obtain the transfer function $\mathbf H$, at least in a very formal way, one only needs to apply the Laplace transform to the operator $\mathbb F_t$. In other words, the third condition implies that the operator $\mathbb F_t$ is bounded on a certain time interval.

Another characterization of well-posed systems is given as follows:

\begin{definition}[Well-posed LTI systems]
\label{def:well-posed}
A system $\Sigma$ defined is said to be well-posed if for some $t>0$ there exists a positive constant $M_t$ such that
$$
\Vert z(t)\Vert_H + \Vert y\Vert_{L^2([0,t];Y)} \leq M_t (\Vert z_0\Vert_H + \Vert u\Vert_{L^2([0,t];U)}).
$$
\end{definition}

Going back to impedance passive systems, let us consider an impedance passive system which is only a system node. Consider now the feedback $u=-\kappa y + d$ with $d\in\mathfrak U$. The energy inequality from Definition \ref{def:passive} becomes

\begin{equation}
\Vert z(t)\Vert^2_H - \Vert z_0\Vert^2_H\leq 2\int_0^t \langle d(s),y(s)\rangle_U \diff s - \kappa \Vert y\Vert^2_{L^2([0,t];U)}
\end{equation}
Using the trivial inequality $2ab\leq \frac{a^2}{\varepsilon}+\varepsilon b^2$ with $\varepsilon=\frac{\kappa}{2}$, one obtains
\begin{equation}
\Vert z(t)\Vert^2_H + \frac{\kappa}{2} \Vert y\Vert^2_{L^2([0,t];U)}\leq  \Vert z_0\Vert^2_H + \frac{1}{2\kappa}\Vert d\Vert_{L^2([0,t];U)},
\end{equation}
which means that the impedance passive system in feedback with the output is well-posed. This will be crucial for our analysis later on.

\begin{example}
\label{example:wave}
We consider a bounded domain $\Omega\subset \mathbb R^n$ with Lipschitz boundary $\Gamma$. $\Gamma_0$ and $\Gamma_1$ are nonempty open subsets satisfying the following: $\Gamma_1 \cap \Gamma_2 = \emptyset$ and $\overline{\Gamma_1 \cup \Gamma_2} = \Gamma$. We consider furthermore a function $b\in L^\infty(\Gamma_1)$ (real valued) satisfying $b(z)\neq 0$ for almost every $z\in \Gamma_1$. The related damped wave equation reads as follows:

\begin{equation}
\label{eq:wave}
    \left\{
\begin{aligned}
    &w_{tt}(t,x) = \Delta w(t,x), &\text{ on } \Omega&\times [0,\infty),\\
    & w(t,x) = 0, &\text{ on } \Gamma_0&\times [0,\infty),\\
    & \frac{\partial}{\partial \nu} w(t,x) = - b(x)y(t,x) + b(x) u(t,x), \quad &\text{ on } \Gamma_1&\times [0,\infty),\\
    & b(x)y(t,x) = b(x)^2 w_t(t,x),  &\text{ on } \Gamma_1&\times [0,\infty), \\
    & w(0,x) = w_0(x),\: w_t(0,x)=w_1(x), \: &\text{ on } \Omega, &
\end{aligned}
    \right.
\end{equation}
where $u$ denotes the input function and $y$ the output function. The functions $w_0$ and $w_1$ are the initial states of the wave equation. 

Now we verify whether this system fits into the strictly output passive setting. To do so, let $W=L^2(\Omega)$ and $U=Y=L^2(\Gamma_1)$, that are Hilbert spaces equipped with usual norms. We denote by $\gamma$ the \textit{Dirichlet trace operator}, defined as follows:
$$
\gamma g = g|_{\Gamma},\: \forall g\in C(\overline{\Omega}).
$$

This operator can be extended from $H^1(\Omega)$ to $L^2(\Gamma)$. We denote by $\mathcal{R}$ the restriction operator mapping $L^2(\Gamma)$ onto $L^2(\Gamma_1)$. One therefore obtains:
$$
\gamma_0 g = \mathcal{R} \gamma g,\quad \forall g\in \Hscr^1(\Omega). 
$$
Then, thanks to this notation, we introduce a Hilbert space $\Hscr^1_{\Gamma_0}(\Omega)$ as follows:
$$
\Hscr^1_{\Gamma_0}(\Omega):=\lbrace g\in H^1(\Omega)\mid (\mathrm I-\mathcal{R})\gamma g = 0\rbrace,\: \Vert g\Vert_{\Hscr^1_{\Gamma_0}(\Omega)}=\Vert \nabla g\Vert_{(L^2(\Omega))^n}.
$$
The space $\Hscr^1_{\Gamma_0}(\Omega)$ is the space of all those functions in $\Hscr^1(\Omega)$ vanishing on $\Gamma_0$. The \textit{Neumann trace operator} on $\Gamma_1$ is as follows:
$$
\gamma_1 g = \frac{\partial}{\partial \nu} f|_{\Gamma_1} = \langle \nabla f,\nu\rangle,\: \forall f\in C^1(\overline{\Omega}),
$$
where $\nu$ is the unit vector in the outward normal direction to $\Gamma$. The operator $\gamma_1$ can be extended to those $f\in \Hscr^1_{\Gamma_0}(\Omega)$ for which $\Delta f\in L^2(\Omega)$, and then $\gamma f$ belongs to a certain Sobolev space on $\Gamma_1$ which includes $L^2(\Gamma_1)$ densely.
 We finally introduce the space
$$W_0:=\lbrace f\in \Hscr^1_{\Gamma_0}(\Omega)\mid \Delta f\in L^2(\Omega),\: \gamma_1f\in bL^2(\Gamma_1)\rbrace,$$
which can be equipped with the following norm:

$$
\Vert w\Vert^2_{W_0}:=\Vert\Delta w\Vert^2_{L^2(\Omega)} + \frac{1}{2}\left\Vert \gamma_1 w\right\Vert^2_{L^2(\Gamma_1)} 
$$
We can rewrite \eqref{eq:wave} as an abstract system (described locally in time). Such systems are called \textit{second order systems}, since they involve a second time derivative. This second order system can be written as follows
\begin{equation}
    \frac{\diff^{\:2}}{\diff t^2} w = A_0 w + \frac{1}{2} B_0 (u(t)- y(t)),\quad y(t) = C_0 \frac{\diff}{\diff t}w(t),
\end{equation}
where $A_0: D(A_0)\subset W \rightarrow W$ is a self-adjoint, positive, and boundedly invertible operator defined as $A_0:=\Delta$. The operator $A_0$ being positive, one can build the space $W_{\frac{1}{2}}=D(A_0^{\frac{1}{2}})$ with the inner product $\langle w_1,w_2 \rangle_{\frac{1}{2}}=\langle A_0^{\frac{1}{2}}w_1,A_0^{\frac{1}{2}}w_2\rangle_W$ and the corresponding norm $\Vert \cdot \Vert_{\frac{1}{2}}$. We denote by $W_{-\frac{1}{2}}$ the dual of $W_{\frac{1}{2}}$ with respect to the pivot space $W$. The state of the system is given by $z(t) = \begin{bmatrix}w(t) & w_t(t)\end{bmatrix}^\top$ and evolves in the state space $H=W_{\frac{1}{2}}\times W$.

We also have
$$
D(A_0) = \lbrace z\in Z_0\mid \gamma_1 z = 0\rbrace, 
$$
and $Z_{\frac{1}{2}}:=H^1_{\Gamma_0}(\Omega)$, meaning that $H=H^1_{\Gamma_0}(\Omega)\times L^2(\Omega)$, equipped with the usual norm:
$$
\Vert (w, w_t)\Vert^2_{H}=\Vert \nabla w\Vert^2_{L^2(\Omega)} + \Vert w_t\Vert_{L^2(\Omega)}.
$$
In order to define $B_0$ and $C_0$, let us introduce the Neumann map $\mathcal{N}\in \mathcal{L}(U,Z_{\frac{1}{2}})$ which satisfies: $\mathcal{N}u=g$ if and only if $g\in H^1_{\Gamma_0}(\Omega)$, $\Delta g =0$ and $\gamma_1g = u$. It is shown in \cite[Section 7]{weiss2003get} that such an operator indeed exists and that, moreover, $\mathcal{N}^*A_0=\gamma_0$. Moreover, one has $\gamma_1\mathcal{N}=\mathrm I$. With these operators, one can define $B_0$ and $C_0$, i.e.
$$
C_0= b \mathcal{N}^*A_0=b\gamma_0,\qquad B_0=C_0^*=A_0 \mathcal{N}b.
$$
Using the fact that $A_0$ is self-adjoint, one can prove that, for any $z_{0}\in X$ and any $u\in L^2_{\mathrm{loc}}([0,\infty);U)$, the generalized solutions to \eqref{eq:wave} satisfy the following: 
\begin{align*}
\frac{1}{2}\Vert z(t)\Vert^2_H \leq & \frac{1}{2} \Vert z_{0}\Vert_H^2+ \langle \mathbf P_t u,\mathbf P_t y\rangle_{\mathscr U} - \mu \Vert \mathbf P_t y\Vert^2_{\mathscr U},
\end{align*}
 with $\frac{1}{\mu}:=\sup_{z\in L^2(\Gamma_1)} b(z)$, which proves that the system \eqref{eq:wave} is strictly output passive.
\end{example}

%\begin{comment}

As explained in \cite[Section 5]{tucsnak2014well}, the concept of well-posed systems might be not satisfactory: related notions of solution are not convenient, and the representation of the output is not easy. Another important notion is the concept of regular linear system. We will introduce it, but, for this, we first need the notion of extended output operators. Indeed, as explained in \cite{tucsnak2009observation,weiss1994regular,weiss1989admissibility}, since $C\in \mathcal{L}(H_1,Y)$ is an admissible output operator for $\mathbb T$, there exists an extension of $C$ defined as:

\begin{equation}
\label{eq:extensionC}
    C_\Lambda z:= \lim_{\lambda\rightarrow + \infty} C\lambda (\lambda \mathrm I_H-A)^{-1}z,
\end{equation}
with $\lambda$ a real number and $z\in D(C_\Lambda)$, where the space $D(C_\Lambda)$ is defined as follows
\begin{equation}
    D(C_\Lambda):=\lbrace z\in H\mid \text{the limit \eqref{eq:extensionC} exists}\rbrace.
\end{equation}
Note that it is not the only extension of $C$, but it turns out that this is sufficient for our purpose. For other extensions, we refer the interested reader to \cite{weiss1994transfer}. 

\begin{definition}
A well-posed triple $(A,B,C)$ is called regular if, for any $u\in U$, one has:
$$
\lim_{\lambda\rightarrow + \infty} \mathbf H(\lambda) u= Du,\: \lambda\in\mathbb R,
$$
where $D\in \mathcal L(U,Y)$. Then, the transfer function of the related system is given by
$$
\mathbf H(s):= C_\Lambda(s\mathrm I_H-A)^{-1} B + D ,
$$
for $s\in \mathbb C$ in the open right half-plane determined by the growth bound of $\mathbb T$, and the output $y$ is defined as
$$
y(t) = C_\Lambda z(t) + Du(t),
$$
for almost every $t\geq 0$.

Moreover, the operator $\mathbb{F}_t$ earlier defined can be defined as follows:
\begin{equation}
\label{eq:F}
\mathbb F_t u:=C_\Lambda \int_0^t \mathbb T_{t-s} B u(s) \diff s + Du(t), 
\end{equation}
for $u\in L^2([0,t];U)$. This operator is defined for some $t$. For $u\in L^2_{\mathrm loc}([0,\infty);U)$, we can introduce the following operator
\begin{equation}
\label{eq:Finf}
\mathbb F_\infty u = C_\Lambda \int_0^t \mathbb T_{t-s} Bu(s) \diff s + Du(t), 
\end{equation}
which is defined for almost every $t\geq 0$.
\end{definition}

%\end{comment}

\section{Stability and observability notions}
\label{sec:stab}
Now, we turn our attention to the notion of exact observability, exponential stability, and input-to-state stability. We will also discuss some properties that can be deduced from exponential stability. We start with the exact observability.

\begin{definition}
Given $T>0$, we say that $C$ is an exact observable output operator in time $T$ for the semigroup $\mathbb T$ if for every initial state $z_0\in D(A)$, there exists $k_C>0$ (depending on time $T$) such that
$$
\int_0^T \Vert \Psi_s z_0\Vert^2_Y\diff s\geq k_C^2 \Vert z_0\Vert^2_H,
$$
\end{definition}
This property, which is the strongest property possible in terms of observability, will be crucial for the construction of the coercive ISS-Lyapunov functional. 

\begin{definition}
We say that $A$ generates an exponentially stable semigroup $\mathbb T$ if there exists $M\geq 1$ and $\gamma>0$ such that, for all $t\geq 0$
$$
\Vert \mathbb T_t\Vert_{\mathcal L(H)} \leq Me^{-\gamma t}
$$
\end{definition}

We call such a stability a uniform stability, in contrast with semi-uniform stability such as polynomial stability \cite{chill2023nonuniform,borichev2010optimal}. If $A$ generates an exponentially stable semigroup $\mathbb T$, then admissible input operators (resp. output operators) in time $T$ are admissible in infinite-time as stated in \cite[Proposition 4.4.5]{tucsnak2009observation} (resp., \cite[Proposition 4.3.3]{tucsnak2009observation}). These notions are defined as follows:

\begin{definition}
We say that $B\in \mathcal L(U,H_{-1})$ is an admissible input operator in infinite-time if there exists a constant $K_B$, independent on the time, such that, for all $t\geq 0$ and all $u\in\mathscr U$,
$$
\Vert \Phi_t u\Vert^2_{H} \leq K_B^2 \Vert \mathbf P_t u\Vert_{\mathscr U}^2.
$$
\end{definition}

\begin{definition}
We say that $C\in \mathcal L(H_{1},Y)$ is an admissible output operator in infinite time if there exists a positive constant $K_C$, independent of the time, such that, for all $t\geq 0$,
$$
\int_0^t \Vert \Psi_s z_0\Vert_Y^2 \diff s \leq K_C^2 \Vert z_0\Vert_{H}^2.
$$
\end{definition}

It turns out that there also exists a notion of exact observability in infinite time, which is given as follows:

\begin{definition}
 We say that $C\in\mathcal L(H_1,Y)$ is exactly observable in infinite-time if there exists a positive constant $k_C$ such that:
$$
\int_0^\infty \Vert \Psi_s z_0\Vert_Y^2 \diff s \geq k_C^2 \Vert z_0\Vert^2_H.
$$
\end{definition}

Now, we focus on the ISS property, which ensures a certain robustness property for the system under consideration.
\begin{definition}
Consider a system defined as follows
\begin{equation*}
\left\{
\begin{aligned}
&\frac{\diff }{\diff t} z(t) = Az(t) + Bu(t),\\
& z(0) = z_0,
\end{aligned}
\right.
\end{equation*}
with $A$ the generator of a semigroup $\mathbb T$ and $B\in \mathcal L(U,H_{-1})$. Is is called exponentially ISS if there exist $M_1,M_2\geq 1$ and $\gamma >0$ such that the mild solution to this system satisfies, for all $t\geq 0$:
$$
\Vert z(t)\Vert_H \leq M_1e^{-\gamma t} \Vert z_0\Vert_H + M_2 \int_0^t e^{-\gamma(t-s)} \Vert u(s)\Vert_{U}\diff s,\: \forall z_0\in H,\: \forall u\in \mathscr U.
$$
\begin{comment}
Moreover, we say that $V:H\rightarrow \mathbb R$, given by $V:=\langle Pz,z\rangle_H$, with $P\in\mathcal L(H)$ self-adjoint and positive, is an ISS Lyapunov functional if the trajectory of \eqref{eq:plant} satisfies, for all $z_0\in Z$ and all $u\in U$
\begin{equation}
\label{eq:ISS}
V(z(t)) \leq Me^{-\lambda t} V(z_0) + \alpha \int_0^t e^{-\lambda(t-s)}\Vert u(s)\Vert_{U}^2\diff s,\quad M,\lambda,\alpha>0.
\end{equation}
\end{comment}
\end{definition}

It turns out that this ISS-Lyapunov functional $V$ does not need to be coercive in an infinite-dimensional setting, as illustrated in \cite{hante2011converse,Mironchenko2018Characterizations,jacob2020noncoercive}. However, recently, it has been proved in \cite{marx2025coercive} that, as soon as one is able to find up an infinite-time exactly observable output operator, then it is possible to build a coercive Lyapunov functional. It relies, moreover, on the notion of regular linear system developed in \cite{weiss1994transfer}.

 \section{Well-posedness of an impedance passive system in feedback with a saturation}

 \label{sec:well-posedness}

Now, we consider an impedance passive system $\Sigma$ as in Definition \ref{def:passive} in feedback with a saturation $\sigma$ as illustrated in Figure \ref{fig:sat}. Associated to the notion of saturation, and following what has been proposed in \cite{marx2018stability} for the case of an infinite-dimensional linear system with a saturated collocated feedback, we introduce the notion of Gelfand-Banach triple.
\begin{definition}[Gelfand-Banach triple]
Given $U$ a real Hilbert space, we define a Gelfand-Banach triple as the triple $\mathcal S^*\subset U\subset \mathcal S$ with $\mathcal S^*$ the topological dual of $\mathcal S$ and where $\mathcal S^*$ (resp. $U$) is continuous and dense in $U$ (resp. $\mathcal S$). For every function $(u_1,u_2)\in S^*\times U$, we can identify the scalar product of $U$ as follows
\begin{equation}
\langle u_1,u_2\rangle_U:=(u_1,u_2)_{\mathcal S^*,\mathcal S},
\end{equation}
where $(\cdot,\cdot)_{\mathcal S^*, \mathcal S}$ is the dual product of the spaces $\mathcal S^*$ and $\mathcal S$. 
\end{definition}
A well known Gelfand-Banach triple is given by $U=L^2(\Omega)$ (with $\Omega\subset \mathbb R^n$ bounded), $\mathcal S:=L^1(\Omega)$ and $\mathcal S^*=L^\infty(\Omega)$. Obviously, if $U=\mathbb R^n$, then $\mathcal S=\mathcal S^*=\mathbb R^n$. Note that every function $u\in U$ belongs to $\mathcal S$. As illustrated in \cite{marx2018stability}, such a formalism allows to consider more general types of saturations. We define them as follows. 

\begin{figure}[h!]
\label{fig:sat}
\begin{center}
\includegraphics[scale=0.3]{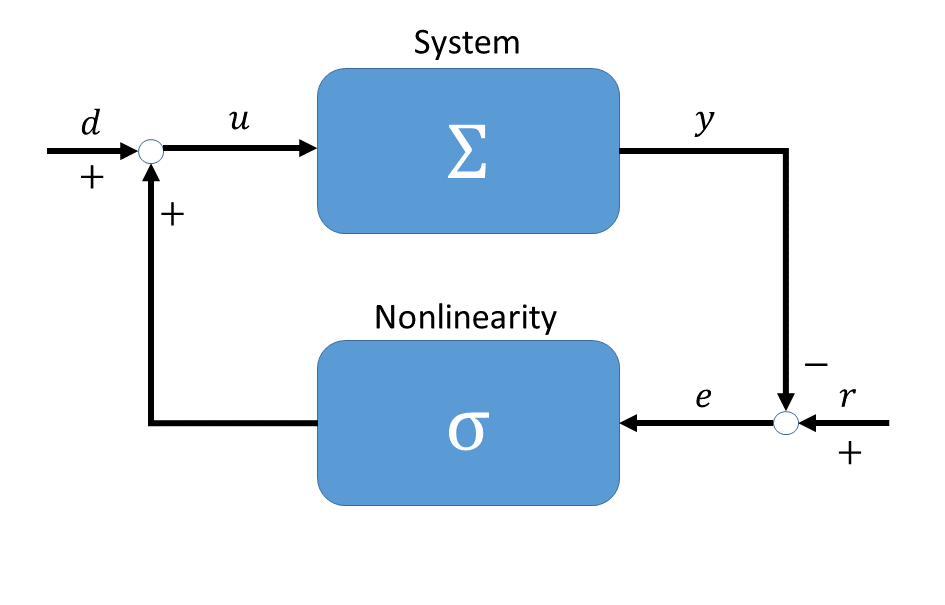}
\caption{Regular linear system in feedback with a saturation $\sigma$.}
\end{center}
\end{figure}

\begin{definition}
\label{def:sat}
Let $(\mathcal S^*,U,\mathcal S)$ be a Gelfand-Banach triple. The function $\sigma: U\rightarrow U$ is called a saturation if it satisfies the following properties:
\begin{itemize}
    \item[(i)] The function $\sigma$ is strictly output passive, i.e., for every $y_1,y_2\in U$
    \begin{equation}
    \label{eq:strict-output}
        \langle\sigma(y_1)-\sigma(y_2),y_1-y_2\rangle_U \geq \Vert \sigma(y_1)-\sigma(y_2)\Vert^2_U.
    \end{equation}
    \item[(ii)] There exist positive constants $\delta,\beta,\gamma>0$ such that
    \begin{equation}
    \left\{
    \begin{aligned}
    &\langle y,\sigma(y)\rangle_U \geq \beta \Vert y\Vert^2_U,&\:\text{ if }& \Vert y\Vert_{\mathcal S^*}\leq \delta,\\
    &\langle y,\sigma(y)\rangle_U \geq \gamma\Vert y\Vert_{\mathcal S}, &\text{ if }& \Vert y\Vert_{\mathcal S^*}\geq \delta,
    \end{aligned}
    \right.
    \end{equation}
    \item[(iii)] There exists a positive constant $K_\sigma$ such that, for every $y_1,y_2\in U$, one has
    \begin{equation}
    \label{eq:ineq-sat-ISS}
        \langle y_1,\sigma(y_1+y_2)-\sigma(y_1)\rangle_U \leq K_\sigma \Vert y_2\Vert_{\mathcal S}.
    \end{equation}
    \item[(iv)] There exists a positive constant $C_\sigma$ such that, for all $y\in \mathcal S^*$
    \begin{equation}
    \label{eq:bounded-sat}
    \Vert \sigma(y)\Vert_{\mathcal S^*}\leq C_\sigma
    \end{equation}
    \item[(v)] For any $s\in U$ (and therefore $s\in \mathcal S$), we have that
    \begin{equation}
    \label{eq:key-inequality}
    \Vert \sigma(s)-s\Vert_{\mathcal S}\leq \langle \sigma(s),s\rangle_U.
    \end{equation}
    \end{itemize}
    
\end{definition}

In the following, we will need to consider $\sigma$ depending on signals $y\in L^2_{\mathrm{loc}}([0,\infty);Y)$. For any $\tau>0$, we define an operator $\tilde\sigma$ from
$L^2([0,\tau];Y)$ to $L^2([0,\tau];U)$ as the pointwise application of
$\sigma$, as follows:
$$ v= \tilde{\sigma}(y)\quad\ \text{if}\ \quad v(t)= \sigma(y(t)) 
\quad \text{for almost every}\quad t\in [0,\tau].$$
In order to avoid any confusion, we will use the notation $\tilde{\sigma}:=\sigma$. We say that such a nonlinearity is \textit{local} (in contrast with non-local nonlinearities). This local property ensures that the closed-loop system remains causal.

Definition \ref{def:sat} is a modification of the definition of saturation given in \cite{curtain2016stabilization,guiver2020circle} and also used in \cite{slemrod1989mcss}, or more recently in \cite{hastir2025well}. The use of the Gelfand-Banach triple $(\mathcal S^*,U,\mathcal S)$ - a concept which has already been introduced in \cite{marx2018stability,jacob2020remarks} - allows to consider a larger class of saturations. We illustrate this fact with several examples.

\begin{example}[Saturations]
Suppose that $U=\mathbb R^2$. Therefore $\mathcal S=\mathcal S^* = U$. The classical saturation $\sat:\mathbb R^2\rightarrow \mathbb R^2$ is defined by
\begin{equation}
\label{eq:sat-usual}
\sat_i(s_1):=\left\{
\begin{aligned}
&s &\text{ if }& |s_1|\leq 1,\\
&1 &\text{ if }& s_1 \geq 1,\\
&-1 &\text{ if }& s_1 \leq - 1,
\end{aligned}
\right.
\end{equation}
where $i=\lbrace 1,2\rbrace$ and $\sat_i$ denotes the $i$-th component of the vector $\sat(s)$. One can easily prove that this function satisfies the properties in Definition \ref{def:sat}.

Suppose now that $\Omega\subset \mathbb R^2$ is bounded and consider the space $U=L^2(\Omega)$. Since $\Omega$ is bounded, one can define the Gelfand-Banach triple $(\mathcal S^*,U,\mathcal S)$ with $\mathcal S:=L^1(\Omega)$ and $\mathcal S^*:=L^\infty(\Omega)$. Consider $y\in L^2(\Omega)$. One can easily prove that the inequalities given in Definition \ref{def:sat} are naturally satisfied for the function $\sat\circ y:U\rightarrow U$.

We can also define the following saturation:
\begin{equation}
\label{eq:sat-U}
\mathrm{sat}_U(u):=\left\{\begin{aligned}
&\hspace{0.38cm}u & \text{ if } & \Vert u\Vert_U \leq 1\\
&\frac{u}{\Vert u\Vert_U} & \text{ if } & \Vert u\Vert_U \geq 1.
\end{aligned}
\right.
\end{equation}
This nonlinearity, even if it looks similar to \eqref{eq:sat-usual}, is very different, and one cannot expect the same asymptotic results, as illustrated in \cite{mcpa2017siam,marx2018stability}. In general, saturation as \eqref{eq:sat-U} allows one to obtain stronger result, see, e.g., \cite{marx2018stability}. 
\end{example}

For the analysis of trajectories provided in Section \ref{sec:GAS}, we will not treat the case of $\mathcal S\neq U$, but some comments about this case are given in Section \ref{sec:conclusion}.

The well-posedness of nonlinear infinite-dimensional systems is obviously more intricate than in the linear setting. The general tool to tackle this issue relies on nonlinear semigroup theory \cite{barbu2010nonlinear, brezis1973operateurs, miyadera1992nl_sg} and more precisely maximal monotone operators theory, but during decades this theory only considered the case without input and output. The input case has been considered in \cite{Mironchenko2020Input} in the context of ISS for infinite-dimensional systems. Very recently, some results were obtained considering inputs and outputs in \cite{singh2020non,singh2021abstract,singh2023second,singh2024local,marx2025impedance} in the case of passive systems, and more precisely, the case of incrementally scattering passive systems (at least for \cite{singh2024local}), allowing to define a nonlinear extension of the Lax-Phillips semigroup presented for the first time (in its linear form) in \cite{lax1990scattering} and discussed in detail in \cite{staffans2005well}. We should therefore consider such a setting, that is, a passive setting. To do so, the following assumption will be made.

\begin{assumption}
\label{ass:impedance}
Assume $U=Y$. We assume that $\Sigma$ is an impedance passive system (possibly a system node) and we assume that $A$ is maximal dissipative (hence, it generates a strongly continuous semigroup  of contractions called $\mathbb T=(\mathbb T_t)_{t\geq 0}$).
\end{assumption}

Indeed, supposing that Assumption \ref{ass:impedance} holds, we do not know whether the system is well-posed. We have seen in Section \ref{sec:passive} that an impedance passive in feedback with the output is well-posed. In order to apply \cite{marx2025coercive}, we need an additional regularity property which is given as follows:

\begin{assumption}
\label{ass:regular}
Let $\Sigma$ an impedance passive system. We suppose that the system $\Sigma$ in feedback with $u=-\kappa y + d$, with $d\in \mathfrak U$, is regular. 
\end{assumption}

In other words, this feedback system (that is more than well-posed, i.e., it is regular\footnote{We think that the results can be extended to the well-posed case (and more precisely, compatible systems), but for notation convenience, we prefer to keep the regularity assumption.}) can be represented as follows

\begin{equation}
\label{eq:plant}
\left\{
\begin{aligned}
&\frac{\diff }{\diff t} z(t) = Az(t) + Bu(t),\\
&y(t) = C_\Lambda z(t) + Du(t),\\
& u(t) = - \kappa y(t) + d(t),\\
& z(0) = z_0,
\end{aligned}
\right.
\end{equation}
where $C_\Lambda$ is the $\Lambda$-extension of $C$. 

\begin{assumption}
\label{ass:stab}
For any $\kappa>0$, we suppose that the operator $A-\kappa B(\mathrm I+\kappa D)^{-1}C_\Lambda$ generates a strongly continuous semigroup $(\mathbb S_t)_{t\geq 0}$ that is exponentially stable, and we also have the resulting output operator $(\mathrm I + \kappa D)^{-1}C_\Lambda$ which is exactly observable in infinite-time.
\end{assumption}

This assumption is restrictive since it might happen that an infinite-dimensional system can be globally asymptotically stable, but not exponentially stable (which does not occur for finite dimensional systems). Under this assumption (which is similar to the one assumed in \cite{liu1996finite,marx2018stability,liu1997locally,curtain2006exponential}), we will modify the feedback law as follows $u=-\sigma(\kappa e)+d$ with $e=y-r$ (see Figure \ref{fig:sat}). In the linear case, it is equivalent to consider just a $d$ instead of a $d$ and a $r$. This is no longer the case for nonlinear systems.

Before stating our first main result, we need to have a definition of what is a well-posed nonlinear system (at least, in the specific case discussed here) such as the one given in Figure \ref{fig:sat}. To do so, we will consider the following implicit operators:

\begin{equation}
\label{eq:fixed-point}
\left\{
\begin{aligned}
&z(t) &=&\: \mathbb T_t z_0 + \Phi_t \mathbf P_t (-\sigma(\kappa e(t)) + d(t)),\\
&\mathbf P_t y(t) &=&\: \Psi_t z_0 + \mathbb F_t \mathbf P_t (-\sigma(\kappa e(t)) + d(t)),\\
& e(t) &=& \: \mathbf P_t y(t)-\mathbf P_t r(t).
\end{aligned}
\right.
\end{equation}
By implicit, we mean that there is no explicit way to express the state $z$ and the output $y$ with respect to $z_0\in H$ and the input $u\in L^2_{\mathrm{loc}}([0,\infty);U)$. It is worth saying that the operators $\Phi_t$, $\Psi_t$ and $\mathbb F_t$ do not need to be bounded, as we are looking at a system node (see \cite[Section 4.7]{staffans2005well} for a precise definition of these operators). In particular, if $\Phi$ can be linked with the operators $A,B,C,D$ it is not the case of the operators $\Psi$ and $\mathbb F$. However with Assumption \ref{ass:regular} we claim that we will be able to prove that the system is well-posed. Roughly speaking, the system $\Sigma^{\sigma}$ given in Figure \ref{fig:sat} is expressed with fixed-point functions given in \eqref{eq:fixed-point}. 

\begin{comment}
We are now ready to define what we mean by well-posedness of the system given in Figure \ref{fig:sat}, and that can be written as follows:

\begin{equation}
\label{eq:system-sat}
\left\{
\begin{aligned}
&\frac{\diff}{\diff t} z = Az-B(\sigma(\kappa e(t)) + d(t)),\\
&y(t) =  C_\Lambda z(t) - D(\sigma(\kappa e(t))+d(t)),\\
&e(t) = y(t) - r(t),\\
& z(0) = z_0
\end{aligned}
\right.
\end{equation}
\end{comment}

\begin{definition}
\label{def:mild-solution}
We say that there exists a unique mild solution to the system $\Sigma^\sigma$ given in Figure \ref{fig:sat} if, for any $z_0\in H$ and any $u\in L^2_{\mathrm{loc}}([0,\infty);U)$, there exist a unique $z\in C([0,\infty);H)$ and a unique $y\in L^2_{\mathrm{loc}}([0,\infty);Y)$ satisfying \eqref{eq:fixed-point}, and if, moreover, $z$ and $y$ continuously depend on $z_0$ and $u$.
\end{definition}

It is worth noticing that \eqref{eq:fixed-point} can be equivalently written as:

\begin{equation}
\label{eq:fixed-point1}
\left\{
\begin{aligned}
&z(t) &=&\: \mathbb T_t z_0 - \Phi_t(\mathrm I + \kappa \mathbb F_t)^{-1} \kappa \Psi_t z_0 + \Phi_t (\mathrm I + \kappa \mathbb F_t)^{-1} \mathbf P_t v,\\
&\mathbf P_t y &=&\: (\mathrm I + \kappa \mathbb F_t)^{-1}\Psi_t z_0 + \mathbb F_t (\mathrm I + \kappa \mathbb F_t)^{-1} \mathbf P_t v,\\
& \mathbf P_t e &=& \: \mathbf P_t(y-r),
\end{aligned}
\right.
\end{equation}
by rewriting the system \eqref{eq:plant} (given with $u=-\sigma(\kappa e)+d$) as
\begin{equation}
\label{eq:cl-sat}
\Sigma_B:\left\{
\begin{aligned}
&\frac{\diff}{\diff t} z(t) = (A-\kappa B (\mathrm I+\kappa D)^{-1}C_\Lambda)z(t) + B(\mathrm I + \kappa D)^{-1}v(t),\\
& y(t) = (\mathrm I + \kappa D)^{-1}C_\Lambda z(t) + (\mathrm I + \kappa D)^{-1}Dv(t),\\
& z(0)=z_0,
\end{aligned}
\right.
\end{equation}
with \begin{equation}
\label{eq:expression-input}
v(t) = \kappa e(t) - \sigma(\kappa e(t)) + d(t). 
\end{equation} 
We can write the operators $A,B,C_\Lambda$ and $D$ because we have assumed the linear system (with $v=0$) to be regular. Since the closed-loop system is supposed to be regular (see Assumption \ref{ass:regular}), we see that the following operators are bounded (in suitable functional spaces):
\begin{equation}
\label{eq:new-operators}
\left\{
\begin{aligned}
&\mathbb S_t:=\mathbb T_t - \Phi_t(\mathrm I + \kappa \mathbb F_t)^{-1} \kappa \Psi_t,\\
&\tilde\Phi_t:=\Phi_t (\mathrm I + \kappa \mathbb F_t)^{-1}\\
&\tilde\Psi_t:= (\mathrm I + \kappa \mathbb F_t)^{-1}\Psi_t\\
&\tilde{\mathbb F}_t:=\mathbb F_t (\mathrm I + \kappa \mathbb F_t)^{-1},
\end{aligned}
\right.
\end{equation}
associated with the generators:
\begin{equation}
\label{eq:new-generators}
\left\{
\begin{aligned}
& A_\kappa:= A-\kappa B (\mathrm I+\kappa D)^{-1}C_\Lambda,\\
& B_\kappa:=B(\mathrm I + \kappa D)^{-1},\\
& C_\kappa:= (\mathrm I + \kappa D)^{-1} C_\Lambda,\\
& D_\kappa:= (\mathrm I + \kappa D)^{-1}D.
\end{aligned}
\right.
\end{equation}

This situation corresponds to Figure \ref{fig:dz}.
\begin{figure}[h!]
\label{fig:dz}
\begin{center}
\includegraphics[scale=0.3]{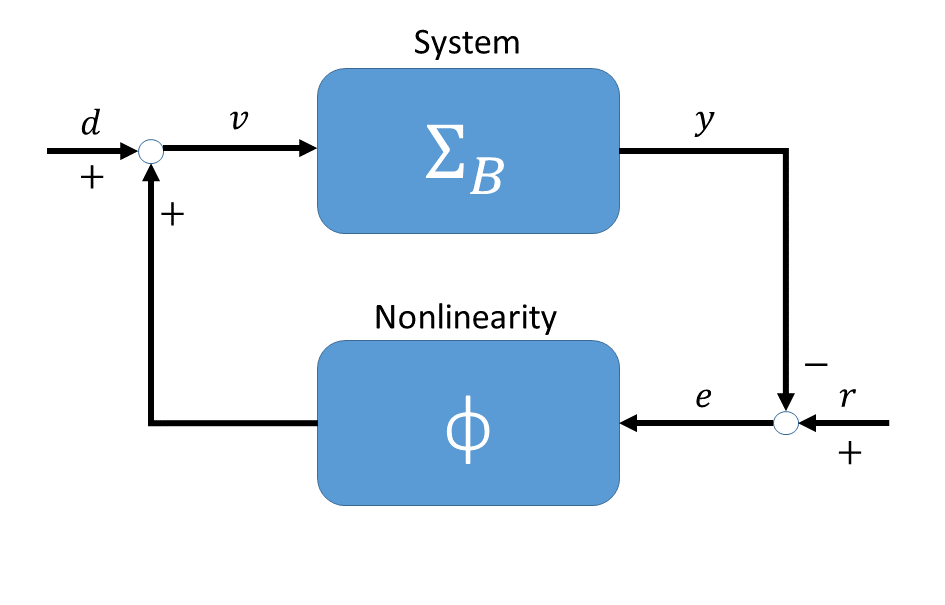}
\caption{Regular linear system in feedback with a dead-zone $\phi$.}
\end{center}
\end{figure}
In Figure \ref{fig:dz}, $\Sigma_B$ given in \eqref{eq:cl-sat} with $v\in\mathfrak U$ an arbitrary input, and the function $\phi(e):= \sigma(\kappa e)-\kappa e$ for every $e\in \mathfrak U$. As said in the caption of Figure \ref{fig:dz}, such functions $\phi$ are called dead-zone functions in the saturation literature \cite{tarbouriech2011book_saturating}. 

Now, we are in position to state our first main result: 

\begin{theorem}
\label{thm:mild}
For any initial state $z_0\in H$ and any inputs $d,r\in L^2_{\mathrm{loc}}([0,\infty);U)$, there exists a unique mild solution to \eqref{eq:cl-sat}. Moreover, there exists $\mu>0$ such that each mild solution $z_1,z_2$ and any output $y_1,y_2$ satisfy the following inequality for any given $z_{01},z_{02}\in H$ and any given $d_1,d_2,r_1,r_2\in L^2_{\mathrm{loc}}([0,\infty);U)$, and for every $t\geq 0$
\begin{equation}
\label{eq:incremental}
\begin{aligned}
\Vert z_1(t)-z_2(t)\Vert^2_{H} - \Vert z_{01}-z_{02}\Vert^2_H \leq & \left\langle \begin{bmatrix}
\mathbf P_t d_1-\mathbf P_t d_2\\ \mathbf P_t r_1-\mathbf P_t r_2
\end{bmatrix},\begin{bmatrix}
\mathbf P_t y_1 
- \mathbf P_ty_2\\ \sigma(\mathbf P_t e_1)-\sigma(\mathbf P_t e_2)
\end{bmatrix}\right\rangle_{\mathfrak U\times \mathfrak U}\\
&-\mu \left\Vert \begin{bmatrix}
\mathbf P_t y_1 - \mathbf P_t y_2\\ \sigma(\mathbf P_t e_1)-\sigma(\mathbf P_t e_2)
\end{bmatrix}\right\Vert^2_{\mathfrak U\times \mathfrak U} 
\end{aligned}
\end{equation}
\end{theorem}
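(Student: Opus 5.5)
We reduce the fixed-point problem \eqref{eq:fixed-point} to a feedback interconnection of the regular closed-loop system $\Sigma_B$ in \eqref{eq:cl-sat} with a static monotone nonlinearity. The plan is to import an abstract existence result for passive systems in feedback with incrementally passive static maps, as in \cite{marx2025impedance,hastir2025well}. The interconnection is rewritten as follows. The linear part is the original impedance passive node $\Sigma$ with input $u=-\sigma(\kappa e)+d$ and output $y$. The nonlinear part maps $e=y-r$ to $\sigma(\kappa e)$.

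\textbf{Step 1 (monotone structure of the nonlinearity).} By Definition \ref{def:sat}(i), the map $e\mapsto \sigma(\kappa e)$ satisfies
\[
\langle \sigma(\kappa e_1)-\sigma(\kappa e_2),e_1-e_2\rangle_U\geq \kappa^{-1}\Vert\sigma(\kappa e_1)-\sigma(\kappa e_2)\Vert_U^2 .
\]
It is therefore maximal monotone, being continuous and monotone on $U$, and also $1/\kappa$-cocoercive. Applied pointwise in time, the same properties hold on $L^2([0,t];U)$.

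\textbf{Step 2 (existence and uniqueness via the regular closed loop).} We use Assumption \ref{ass:regular} to work with the well-posed operators \eqref{eq:new-operators} and the generators \eqref{eq:new-generators}. Solving \eqref{eq:fixed-point1} amounts to finding $\mathbf P_t y$ such that
\[
\mathbf P_t y=\tilde\Psi_t z_0+\tilde{\mathbb F}_t\mathbf P_t\bigl(\kappa(y-r)-\sigma(\kappa(y-r))+d\bigr).
\]
Impedance passivity of $\Sigma$ gives that $\mathbb F_t$ is accretive on $L^2([0,t];U)$, in the sense that $\langle \mathbb F_t u,u\rangle\geq0$ for zero initial state. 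Combined with the monotonicity of $\sigma$, the equation is equivalent to solving
\[
\bigl(\mathbb F_t^{-1}\text{-type accretive operator}+\sigma(\kappa(\cdot -r))\bigr)y=\text{data}.
\]
I would instead apply Minty/Browder directly to the operator $u\mapsto u+\sigma(\kappa(\mathbb F_t u+\Psi_t z_0 - r))-d$ on $L^2([0,\tau];U)$. This operator is the sum of the identity and a monotone continuous operator, because $\mathbb F_t$ is accretive and $\sigma$ is monotone. It is hemicontinuous and strongly monotone, hence bijective. Causality of $\mathbb F_t$ and locality of $\sigma$ make solutions on $[0,\tau]$ consistent as $\tau$ grows, which yields a global $y\in L^2_{\mathrm{loc}}$. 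The state $z$ is then recovered through $\tilde\Phi_t$ and $\mathbb S_t$, and lies in $C([0,\infty);H)$ by well-posedness of $\Sigma_B$. This step is the main obstacle. When $\Sigma$ is only a system node, $\mathbb F_t$ is not a priori bounded, so I would run the Minty argument on the bounded, regular closed loop $\Sigma_B$ with the cocoercive dead-zone $\phi$. There $I-\kappa^{-1}\cdot$ type identities show that $-\phi$ is nonexpansive in the right metric, and the incremental passivity of $\Sigma$ provides the needed accretivity.

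\textbf{Step 3 (incremental inequality).} Write the energy inequality of Definition \ref{def:passive} for the difference of two trajectories. This is legitimate by linearity, first for smooth compatible data and then by density and continuity. It gives
\[
\Vert z_1(t)-z_2(t)\Vert^2-\Vert z_{01}-z_{02}\Vert^2\leq 2\langle \mathbf P_t(u_1-u_2),\mathbf P_t(y_1-y_2)\rangle .
\]
Substituting $u_i=-\sigma(\kappa e_i)+d_i$ and $y_i=e_i+r_i$ produces the $d$-$y$ and $r$-$\sigma$ cross terms. The remaining term $-\langle\sigma(\kappa e_1)-\sigma(\kappa e_2),e_1-e_2\rangle$ is bounded above using Step 1 by $-\kappa^{-1}\Vert\sigma(\kappa e_1)-\sigma(\kappa e_2)\Vert^2$. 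To also gain a negative $\Vert y_1-y_2\Vert^2$ term, we use the linear closed-loop dissipation of Section \ref{sec:passive}: writing $\sigma=\kappa e-\phi$, we combine the $\kappa\Vert y\Vert^2$ dissipation of $\Sigma_B$ with the cocoercivity bound, absorbing cross terms by Young's inequality. This yields \eqref{eq:incremental} with some $\mu>0$, after renormalizing the factor $2$ and $\kappa$ into the constants. Continuous dependence follows from the same estimate with Young's inequality.
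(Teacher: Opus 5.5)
The paper gives no argument of its own. It views the loop as $\Sigma_B$ (strictly output passive with margin $\kappa$) in feedback with the dead zone $\phi$, and invokes \cite[Theorem 5.5]{marx2025impedance}. Your direct route has two genuine gaps.

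\textbf{Step~2: the monotonicity claim is false.} You assert that $u\mapsto\sigma(\kappa(\mathbb F_t u+\Psi_t z_0-r))$ is monotone because $\mathbb F_t$ is accretive and $\sigma$ is monotone. Monotonicity of $\sigma$ controls the pairing with $\mathbb F_t(u_1-u_2)$, not with $u_1-u_2$. Here is a counterexample. Take the static impedance passive system $y=Du$ on $U=\mathbb R^2$, with $D$ the rotation by $\pi/3$, so $\langle Du,u\rangle=\frac12\|u\|^2$. Take componentwise $\sat$, $\kappa=1$, $u_1=(100,70)$ and $u_2=0$. Then $\sat(Du_1)=(-1,1)$ and the pairing equals $-30<0$. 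So ``identity plus monotone, hence bijective'' is unavailable.

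A monotone reformulation has to keep both loop signals as unknowns. One option is the inverse relation $\mathbb F_t^{-1}$ (or $\tilde{\mathbb F}_t^{-1}$, as in the proof of Lemma~\ref{lem:l-nl}) summed with the static map via Rockafellar's theorem. Another is the skew operator $(y,u)\mapsto(\sigma(\kappa(y-r))+u,\,-y+\mathbb F_t u)$. Even then, nothing is strongly monotone when $\Sigma$ is only impedance passive. You get uniqueness, but existence needs an extra range argument, for instance one using the boundedness (iv) of $\sigma$. Your fallback on $\Sigma_B$ does not supply this either: $\|\tilde{\mathbb F}_t\|\le 1/\kappa$ and $-\phi=(\mathrm I-\sigma)(\kappa\,\cdot)$ is $\kappa$-Lipschitz, so the small-gain bound is exactly $1$ and gives no contraction.

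\textbf{Step~3: the $\|\delta y\|^2$ term cannot be obtained.} You correctly derive
\[
\|\delta z(t)\|^2-\|\delta z_0\|^2\le 2\langle\mathbf P_t\delta d,\mathbf P_t\delta y\rangle-2\langle\mathbf P_t\delta r,\mathbf P_t\delta\sigma\rangle-\tfrac{2}{\kappa}\|\mathbf P_t\delta\sigma\|^2,\qquad \delta\sigma:=\sigma(\kappa e_1)-\sigma(\kappa e_2).
\]
The additional $-\mu\|\mathbf P_t(y_1-y_2)\|^2$, however, cannot be extracted. In $\Sigma_B$ coordinates the dissipation $-2\kappa\|\mathbf P_t\delta y\|^2$ is cancelled, up to an $r$-cross term, by the $\kappa\,\delta e$ part of $\delta v$. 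Equivalently, $\langle\phi(e_1)-\phi(e_2),e_1-e_2\rangle\le0$ because $\sigma$ is $1$-Lipschitz. So Young's inequality has nothing to absorb into.

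This cannot be repaired. Take $H=U=\mathbb R$, $A=0$, $B=C=1$, $D=0$, $\kappa=1$, $\sigma=\sat$, $d_i=r_i=0$, $z_{01}=10$, $z_{02}=20$. Assumptions~\ref{ass:impedance}--\ref{ass:stab} hold, and $z_i(t)=z_{0i}-t$ on $[0,9]$. The left side of \eqref{eq:incremental} is then $0$, while the right side is $-100\mu t$. Hence \eqref{eq:incremental} as stated fails under impedance passivity alone.

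What your computation actually proves is a weaker version. It has dissipation only in $\delta\sigma$, a factor $2$ on the cross terms (not a constant you can renormalize away), and the opposite sign on the $r$-term. Without a $\|\delta y\|^2$ term, your one-line continuous-dependence claim also breaks, since $2\langle\delta d,\delta y\rangle$ is then uncontrolled. The paper's citation has the same defect, because $\phi$ is not incrementally strictly output passive. A $\|\delta y\|^2$ term would need $\Sigma$ itself to be strictly output passive.
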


The proof of this theorem is based on the application of a recent result \cite[Theorem 5.5]{marx2025impedance} dealing with impedance passive systems, which states that a well-posed strictly output passive system in feedback with a strictly output passive feedback operator is well-posed and incrementally strictly output passive (which corresponds to the property in \eqref{eq:incremental}). We will not recall this result, nor prove Theorem \ref{thm:mild}, as it is just an application of \cite[Theorem 5.5]{marx2025impedance}. As an assumption, we already know that $\sigma$ is an incrementally strictly output passive operator (see e.g., Assumption \ref{def:sat}) and so is the dead-zone function $\phi$. It remains to show that the open-loop system $\Sigma_B$ is strictly output passive, which is true because of Assumption \ref{ass:impedance}. 

It is worth noting that \cite{hastir2025well} also proposes a well-posedness analysis, but an additional assumption on the transfer function is needed. Namely, the transfer function needs to be coercive for one frequency, which is a fair assumption when one restricts the analysis to one-dimensional PDEs (or, more precisely, when $U=\mathbb R^m$), but which is stronger and even restrictive for more general PDEs. However the result in \cite{hastir2025well} allows to consider strong solutions, and we are not aware whether such an extension holds for the class of systems where the coercivity fails. Nevertheless, we will recall some results obtained in this paper; they will be useful to obtain stronger results than polynomial stability. To do so, we need some notation. We define the nonlinear operator
$$
A_\phi:\: D(A_\phi)\subset H\rightarrow H,
$$
with the domain defined as
$$
D(A_\phi):=\lbrace z\in H\mid \exists v\in U \text{ s.t. } A_\kappa z + B_\kappa \phi(-v)\in H,\: v= C_\kappa z + D_\kappa \phi(-v)\rbrace,
$$
and
$$
A_\phi z:=A_\kappa z + B_\kappa v,\quad v= C_\kappa z + D_\kappa \phi(-\kappa v).
$$
We therefore have the following result:

\begin{theorem}[\cite{hastir2025well}, Theorem 3.9.]
\label{thm:hastir}
Given an arbitrary input $v\in \mathfrak U$, consider \eqref{eq:cl-sat} a strictly output passive system whose transfer function $\mathbf H$ satisfies 
\begin{equation}
\label{eq:coercivity-tf}
\mathfrak R_e \mathbf H(\lambda) \geq c_\lambda \mathrm I
\end{equation}
for some $\lambda,c_\lambda >0$. Consider a nonlinearity $\phi=\mathrm I-\sigma$, where $\sigma$ is a saturation as in Definition \ref{def:sat}. Therefore, for $d=r=0$, and for any $z_0 \in \overline{D(A_\phi)}$, there exists a unique mild solution to \eqref{eq:cl-sat}. Moreover, for $d=r=0$, and for any $z_0 \in D(A_\phi)$, $z$ is a strong solution to \eqref{eq:cl-sat}, that is, $z$ satisfies \eqref{eq:cl-sat} for almost every $t\geq 0$ and we have $z(t)\in D(A_\phi)$. We also have that $y$ is a right continuous function satisfying
\begin{equation}
\label{eq:Linfty}
y\in L^\infty([0,\infty);U).
\end{equation}
\end{theorem}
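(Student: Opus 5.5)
The plan is to realise the closed loop \eqref{eq:cl-sat} with $d=r=0$ as the abstract Cauchy problem $\dot z = A_\phi z$. I would then show that $A_\phi$ is m-dissipative on $H$ and invoke the Kōmura--Crandall--Liggett theory of nonlinear contraction semigroups. That theory gives a semigroup of contractions on $\overline{D(A_\phi)}$, which yields the mild solutions. For $z_0\in D(A_\phi)$ it also gives Lipschitz strong solutions with $z(t)\in D(A_\phi)$ and $t\mapsto\Vert A_\phi z(t)\Vert_H$ nonincreasing.

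\textbf{Dissipativity.} Take $z_1,z_2\in D(A_\phi)$ with associated outputs $v_1,v_2$. Apply the strict output passivity of $\Sigma_B$, in its generator form on $D(S)$, to the differences $(z_1-z_2,\; \phi(-\kappa v_1)-\phi(-\kappa v_2))$. This bounds $\langle A_\phi z_1-A_\phi z_2,z_1-z_2\rangle_H$ by $\langle w_1-w_2, v_1-v_2\rangle_U$ minus a nonnegative term, where $w_i=\phi(-\kappa v_i)$. The monotonicity of $\phi$ inherited from item (i) of Definition~\ref{def:sat} makes the cross term nonpositive, so $A_\phi$ is dissipative. The same computation with $z_1=z_2$, together with the coercivity \eqref{eq:coercivity-tf}, shows that the output $v$ attached to a given $z$ is unique. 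This is needed for $A_\phi$ to be single-valued.

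\textbf{Maximality (main obstacle).} Fix $\lambda>0$ as in \eqref{eq:coercivity-tf} and $f\in H$, and look for $z\in D(A_\phi)$ with $\lambda z-A_\phi z=f$. I would make the ansatz $z=(\lambda-A_\kappa)^{-1}(f+B_\kappa w)$ with $w=\phi(-\kappa v)$. The output relation then becomes an equation posed in $U$ alone:
\[
v-\mathbf H(\lambda)\phi(-\kappa v)=C_\kappa(\lambda-A_\kappa)^{-1}f .
\]
I would rewrite this as a sum of a coercive linear operator, coming from $\mathfrak R_e\mathbf H(\lambda)\geq c_\lambda \mathrm I$, and a Lipschitz monotone map, coming from $\phi$. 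Its unique solvability would then follow from Minty--Browder, or from a Banach fixed point after the standard resolvent manipulation. The delicate point is arranging the signs and the scaling by $\kappa$ so that the combined map is strongly monotone. The coercivity of $\mathbf H(\lambda)$ is used exactly here, and this is where I expect most of the work.

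\textbf{Output bound.} For $z\in D(A_\phi)$ with output $v$, the identity $(\lambda-A_\kappa)z=\lambda z-A_\phi z+B_\kappa w$ gives
\[
v=C_\kappa(\lambda-A_\kappa)^{-1}(\lambda z-A_\phi z)+\mathbf H(\lambda)w .
\]
Combining this with the coercivity and the Lipschitz bound on $\phi$ should give $\Vert v\Vert_U\leq c\,(\Vert z\Vert_H+\Vert A_\phi z\Vert_H)$. Along a strong solution, both $\Vert z(t)\Vert_H$ and $\Vert A_\phi z(t)\Vert_H$ are nonincreasing. Hence $y(t)=v(t)$ is bounded, which is \eqref{eq:Linfty}. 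Finally, strong solutions of m-dissipative evolutions are right-differentiable everywhere with right-continuous right derivative $A_\phi z(t)$. By the continuous dependence of $v$ on $(z,A_\phi z)$ obtained in the maximality step, $y$ is right continuous.
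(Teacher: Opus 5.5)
Your proposal is correct and follows the route the paper attributes to \cite{hastir2025well}: you prove that $A_\phi$ is maximal dissipative, use \eqref{eq:coercivity-tf} exactly to solve the resolvent equation in $U$ (multiplying $v-\mathbf H(\lambda)\phi(-\kappa v)=g$ by the coercive operator $\mathbf H(\lambda)^{-1}$ gives a strongly monotone Lipschitz equation), and then obtain strong solutions, the bound \eqref{eq:Linfty} and the right continuity of $y$ from nonlinear contraction semigroup theory. The only small inaccuracy is that uniqueness of $v$ for fixed $z$ already follows from strict output passivity ($\mu>0$), so coercivity is not needed at that step.
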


The proof given in \cite{hastir2025well} is based on maximal monotone operator theory \cite{miyadera1992nl_sg,brezis1973operateurs}, and consists of proving that $A_\phi$ is maximal monotone. Maximality, in particular, requires this additional assumption on the transfer function, which is not needed in \cite{marx2025impedance}. However, in the latter reference, it is not proved that there exists a strong solution.

\begin{remark}[Lack of coercivity]
\label{rem:lack-coercivity}
The assumption given in \eqref{eq:coercivity-tf} is verified in most cases for one-dimensional PDEs, i.e., for $U$ reduced to be finite-dimensional. Indeed, suppose that we have a compact transfer function $\mathbf H\in \mathcal L(U)$ with $U$ an infinite-dimensional space. Then, take an orthonormal sequence $(e_n)_{n\in\mathbb N}\subset U$. By Bessel inequality \cite[Theorem 3.4-6]{analysis2007Kreyszig}, it turns out that $e_n$ converges weakly to $0$, i.e., for every $u\in U$, one has $\langle u,e_n\rangle_U\rightarrow 0$. Since $\mathbf H$ is a compact operator, then $\mathbf H e_n$ converges strongly \cite[Theorem 8.1-7]{analysis2007Kreyszig} to $0$, which implies that \eqref{eq:coercivity-tf} cannot hold.

Going back to the example given in Example \ref{example:wave}, it can be shown that the transfer function of \eqref{eq:wave} is compact. Indeed, it can be written as follows
$$
\mathbf H(s)\hat u(s,x):= b(x) s \gamma_1\hat w ,
$$
where $s$ is the Laplace variable, $\hat u$ (resp. $\hat w$) denotes the Laplace transform of $u$ (resp. $w$) with respect to the time-variable, where we recall that $\gamma_1$ is the Neumann trace operator, and where $\hat w$ is the solution to the following problem:
\begin{equation}
\left\{
\begin{aligned}
&(s^2-\Delta) \hat w = 0,\: &\text{ on }& \Omega,\\
& \hat w = 0,\: &\text{ on }& \Gamma_0,\\
& \frac{\partial}{\partial \nu} \hat w + s b^2\hat w = b \hat u,\: &\text{ on }& \Gamma_1.
\end{aligned}
\right.
\end{equation}
Under sufficient boundary regularity (for instance $\Omega$ of class $C^{1,1}$), by standard elliptic regularity and the trace theorem, we have  $\mathbf H(s) \in\mathcal L(L^2(\Gamma_1),H^{1/2}(\Gamma_1))$, which is sufficient to prove that $\mathbf H$ is a compact operator for all $s\in \rho(A)$. Therefore, Assumption \ref{eq:coercivity-tf} cannot hold. This illustrates that the latter assumption is naturally restricted to systems whose input space is finite-dimensional, such as one-dimensional boundary control systems. 

Indeed, if we consider Example \ref{eq:wave} in its one-dimensional version, the transfer function writes
$$
\mathbf H(s):=\frac{b^2 \tanh(s)}{1+b^2 \tanh(s)},
$$
which can be proved to be positive. Since $U=\mathbb R$ (and therefore, $\mathcal S=U$) in this case, Assumption \ref{eq:coercivity-tf} holds in the 1D case. 
\end{remark}

\section{Stability results}
\label{sec:GAS}

Before presenting our main results, we need to introduce some useful notions for nonlinear systems. 
\begin{definition}
We say that a function $\ell:\mathbb R_+\rightarrow \mathbb R_+$ is a $\mathcal K_\infty$-function if it is strictly increasing, vanishing at $0$ and such that $\lim_{s\rightarrow + \infty} \ell(s)=+\infty$. We say that a function $\rho:\mathbb R_+\times\mathbb R_+\rightarrow \mathbb R_+$ is a $\mathcal{KL}$-function if for all $t\in\mathbb R_+$, $\rho(t,\cdot)$ is a $\mathcal{K}_\infty$-function and if for all $s\in\mathbb R_+$, $\rho(\cdot,s)$ is decreasing and such that $\lim_{t\rightarrow +\infty} \rho(t,s)=0$ (again, for all $s\in\mathbb R_+$).
\end{definition}

These classes of function have been proved to be very useful in the (nonlinear) finite-dimensional setting, as it is nicely illustrated in \cite{bible_khalil,sontag1995characterizations,isidori1985nonlinear}. It is therefore obvious that such functions are also useful in the infinite-dimensional setting as it has been already established in \cite{Mironchenko2020Input,chitour2020one}.

In such a general setting, one cannot hope having a uniform global asymptotic stability when the disturbances $d$ and $r$ are equal to $0$. In particular, a counter-example is given in \cite[Theorem 7]{jacob2020remarks}, where it is shown that a uniform global asymptotic stability does not hold for a transport equation with a bounded input operator and a saturation where $\mathcal S:=L^1([0,1])$. However, as proved in \cite{marx2018stability} (and in a more specific way, \cite{chitour2019p}), a semi-global exponential stability holds for bounded input operators and saturation where $\mathcal S\neq U$, which means (obviously) that semi-global exponential stability does not imply uniform global asymptotic stability. Another example is provided in \cite[Section 4.5]{chitour2020one} for a one-dimensional wave equation with nonlinear boundary damping (hence, an unbounded input operator in open-loop), and, in this case, we have $\mathcal S=U=\mathbb R$. In other words, uniform stability results (ISS or just uniform global asymptotic stability) cannot be obtained when a saturation is involved for most infinite-dimensional systems.

Based on the latter paragraph, we specify in the next definition what we mean by uniform ISS and semi-uniform ISS properties.

\begin{definition}[Concepts of stability]
\label{def:stability-concepts}
The different concepts needed for the sequel are defined as follows. 
\begin{itemize}
\item[(1)] System \eqref{eq:cl-sat} is called \textit{uniformly ISS} if if there exist a $\mathcal{KL}$-functions $\rho$ and a $\mathcal K_\infty$-function $\ell$ such that, for all $t\geq 0$
\begin{equation}
\Vert z(t)\Vert_H \leq \rho(t,\Vert z_0\Vert_H) + \ell(\Vert \mathbf P_t d\Vert^2_{\mathfrak U}, \Vert \mathbf P_t r\Vert^2_{\mathfrak U}).
\end{equation}
It is called \textit{uniformly globally asymptotically stable} (UGAS for short) if $d=r=0$.
\item[(2)] Suppose that $d$ is uniformly bounded, i.e., there exists $K_d>0$ such that $\Vert d(t)\Vert_U\leq K_d$ for almost every $t\geq 0$. Given a dense subspace of $H$, denoted by $H_s$, system \eqref{eq:cl-sat} is called \textit{semi-uniformly ISS} if there exist a $\mathcal{KL}$-functions $\rho$ and a $\mathcal K_\infty$-function $\ell$ such that, for all $t\geq 0$
\begin{equation}
    \Vert z(t)\Vert_H\leq \rho(\Vert z_0\Vert_{H_s},t)+\ell(\Vert \mathbf P_t r\Vert_{\mathfrak R}),
\end{equation}
where $\mathfrak R:=L^1_{\mathrm{loc}}([0,\infty);U).$ The functions $\rho$ and $\ell$ depend on the bound $K_d$.
It is called \textit{semi-uniformly globally asymptotically stable} (SUGAS for short) if $d=r=0$.
\item[3.] Given a dense subspace of $H$, denoted by $H_s$, system \eqref{eq:cl-sat} is called \textit{semi-globally exponentially ISS} if it is semi-uniformly ISS with $\rho(s,t) = \psi (s) e^{-\lambda t}$, where $\psi$ is a continuous function vanishing at $0$, strictly increasing and diverging as $s$ goes to infinity. It is called \textit{semi-globally exponentially stable} (SGES) if it is semi-globally exponentially ISS with $d=r=0$.
\end{itemize}
\end{definition}

\begin{remark}
It is worth noting that the ISS estimate is different from what is known in the literature \cite{Mironchenko2020Input}. It is due to the fact that $d$ needs to be bounded and because the estimate in \eqref{eq:ineq-sat-ISS} imposes to look at $L^1$-estimates.
\end{remark}

Here, as will be shown later on, even if the linear system is exponentially stable (or if $\sigma$ is reduced to be the identity operator), it turns out that in most cases, the asymptotic stability is not uniform. This is mainly due to the high amplitude in the output. Recall that the energy function $\frac{1}{2}\Vert z\Vert_H^2$ satisfies, for all $t\geq 0$ and all $z_0\in H$
$$
\Vert z(t)\Vert_H^2 - \Vert z_0\Vert_H^2 \leq - 2 \int_0^t \langle \sigma(y),y\rangle_U \diff t,
$$
with $u$ and $d$ identically equal to $0$. For the outputs $y$ satisfying $\Vert y(t)\Vert_U\geq \delta$, it turns out that
$$
\Vert z(t)\Vert_H^2 - \Vert z_0\Vert_H^2\leq - 2\gamma \Vert y\Vert_{L^1([0,t];U)}.
$$
Therefore, one is left with the following question: is it possible to deduce $L^1$-exact observability from a $L^2$-exact observability ? This question has been tackled in \cite{zuazua2012remark}: in the case of bounded input operator the answer is positive, but the property has been proved to be false for unbounded input operator with the help of a simple transport equation. 

Before stating and proving our main results, we define the function
\begin{equation}
\mathcal J(r):= \left\{
\begin{aligned}
& r^2 &\text{ if } r\leq \delta,\\
& r &\text{ if } r\geq \delta,
\end{aligned}
\right.
\end{equation}
and we suppose that there exist the positive constants $\overline c_{\sigma},\underline c_{\sigma}>0$ such that
\begin{equation}
\label{eq:jauge}
\underline c_{\sigma} \mathcal J(\Vert y\Vert_U) \leq \langle \sigma(y),y\rangle_U \leq \overline c_{\sigma} \mathcal J(\vert y\Vert_U),
\end{equation}
which is satisfied for any saturation given in Definition \ref{def:sat} when $\mathcal S=U$. In the sequel, an estimate will be given in terms of the function $\mathcal J$. To make the reading clearer, we give some notation for signals $y\in L^p_{\mathrm{loc}}([0,\infty);U)\cap \mathfrak U$. For any $p\in [1,\infty)$, we denote by $\Vert \cdot \Vert_p$ the norm
$$
\Vert y\Vert^p_p:=\int_0^{\infty} \Vert y(t)\Vert_U^p\diff t,
$$
and for $p=\infty$, we have
$$
\Vert y\Vert_{\infty}:=\esssup_{t\in [0,\infty]} \Vert y(t)\Vert_U.
$$
It is clear that, for any $T>0$
$$
\Vert \mathbf P_T y\Vert_p^p = \int_0^T \Vert \mathbf P_T y(t)\Vert_U^p \diff t,\quad \Vert \mathbf P_T y\Vert_{\infty}=\esssup_{t\in [0,T]} \Vert y(t)\Vert_U.
$$

We have the following proposition, based on a famous interpolation theorem, called the Riesz-Thorin's theorem.
\begin{proposition}
\label{prop:jauge}
Suppose that $\sigma$ is saturation as in Definition \ref{def:sat} with $\mathcal S:=U$. Therefore, given any $T>0$ and any $p>2$, any signal $y\in L^p_{\mathrm{loc}}([0,\infty);U)\cap \mathfrak U$ satisfies the following estimate
\begin{equation}
\label{eq:jauge-inequality-prop}
\int_0^T \mathcal{J}(\Vert y(t)\Vert_U) \diff t \geq \min\left(\Vert \mathbf P_T y\Vert_2^2,\frac{1}{2}\frac{\Vert \mathbf P_T y\Vert_{2}^{\frac{1}{\theta}}}{\Vert \mathbf P_T y\Vert^{\frac{1-\theta}{\theta}}_{p}}\right),
\end{equation}
with $\theta:=\frac{p-2}{2(p-1)}$.
\end{proposition}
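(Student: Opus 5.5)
The plan is to split the time interval according to the two regimes of $\mathcal J$ and to treat the large-amplitude part with a Hölder (log-convexity, i.e. Riesz--Thorin) interpolation between $L^1$, $L^2$ and $L^p$. Fix $T>0$ and set $E:=\{t\in[0,T]\mid \Vert y(t)\Vert_U\le\delta\}$ and $F:=[0,T]\setminus E$. Then
\begin{equation*}
\int_0^T\mathcal J(\Vert y(t)\Vert_U)\diff t=\int_E\Vert y\Vert_U^2\diff t+\int_F\Vert y\Vert_U\diff t=:a+\beta ,
\end{equation*}
while $\Vert \mathbf P_Ty\Vert_2^2=a+b$ with $b:=\int_F\Vert y\Vert_U^2\diff t$.

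\textbf{Step 1: the interpolation exponent.} Writing $\frac12=\frac{\theta}{1}+\frac{1-\theta}{p}$ gives exactly $\theta=\frac{p-2}{2(p-1)}\in(0,\frac12)$. Hölder's inequality then yields, for any measurable $G\subset[0,T]$,
\begin{equation*}
\Vert \mathbf 1_G y\Vert_2\le \Vert \mathbf 1_G y\Vert_1^{\theta}\,\Vert \mathbf 1_G y\Vert_p^{1-\theta}.
\end{equation*}
Hence
\begin{equation*}
\Vert \mathbf 1_G y\Vert_1\ge \frac{\Vert \mathbf 1_G y\Vert_2^{1/\theta}}{\Vert \mathbf P_Ty\Vert_p^{(1-\theta)/\theta}}.
\end{equation*}
Taking $G=[0,T]$, this shows that the second entry of the min is at most $\frac12\Vert\mathbf P_Ty\Vert_1=\frac12\bigl(\int_E\Vert y\Vert_U\diff t+\beta\bigr)$.

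\textbf{Step 2: case distinction.} There are three cases.
\begin{enumerate}
\item If $F$ is negligible (so $b=\beta=0$), then the left-hand side of \eqref{eq:jauge-inequality-prop} equals $a=\Vert \mathbf P_Ty\Vert_2^2$. This is at least the minimum.
\item If $\int_E\Vert y\Vert_U\diff t\le \beta$, then by Step 1 the second entry of the min is at most $\frac12\cdot 2\beta=\beta\le a+\beta$, so the bound holds.
\item In the remaining case $\int_E\Vert y\Vert_U\diff t>\beta$, the small-amplitude set carries most of the $L^1$ mass. Here I would compare $\int_E\Vert y\Vert_U\diff t$ with $a=\int_E\Vert y\Vert^2_U\diff t$. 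I would also use that on $F$ one has $\Vert y\Vert_U\ge \delta$, so that $\beta\ge \delta\,|F|$ and $b\le \Vert \mathbf P_T y\Vert_p^{2}|F|^{1-2/p}$. The aim is to show that either $a\ge \Vert\mathbf P_Ty\Vert_2^2$ up to the portion controlled by $\beta$, or that $a+\beta$ dominates half of $\Vert \mathbf P_Ty\Vert_1$.
\end{enumerate}

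\textbf{Main obstacle.} The third case is where I expect the difficulty, because there $\int_E\Vert y\Vert_U$ must be controlled by $\int_E\Vert y\Vert_U^2$. Pointwise on $E$ one only has $\Vert y\Vert_U^2\le\delta\Vert y\Vert_U$, which points the wrong way. My first attempt would be to apply the interpolation of Step 1 separately on $E$ and on $F$. I would then combine the two pieces with the convexity inequality $(a+b)^{q}\le 2^{q-1}(a^q+b^q)$, $q=\frac1{2\theta}>1$, choosing the threshold between the cases as "$a\ge b$" versus "$a<b$". This gives:
\begin{enumerate}
\item when $b>a$, the bound $\beta\ge b^{q}/\Vert\mathbf P_Ty\Vert_p^{(1-\theta)/\theta}\ge 2^{-q}\Vert\mathbf P_Ty\Vert_2^{2q}/\Vert\mathbf P_Ty\Vert_p^{(1-\theta)/\theta}$;
\item when $a\ge b$, the bound $a\ge \frac12\Vert \mathbf P_Ty\Vert_2^2$.
\end{enumerate}
These two bounds alone produce the constants $2^{-q}$ and $\frac12$, which is weaker than the stated $\frac12$ and $1$. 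To reach the exact constants, I would therefore keep the first-case argument and the second-case argument above, which give precisely $1$ and $\frac12$. For the mixed case I would try to exploit the normalization implicit in \eqref{eq:jauge} (e.g.\ taking $\delta\ge1$, so that $\Vert y\Vert_U\le \Vert y\Vert_U^2$ whenever $1\le \Vert y\Vert_U\le\delta$). The remaining set, where $\Vert y\Vert_U<1$, would be absorbed using $\Vert y\Vert_U^2\le\Vert y\Vert_U$ together with Step 1 applied on $E$. Checking that this absorption does not lose a factor is the step I would verify most carefully. If it fails for small $\delta$, that would indicate the statement needs $\delta$ normalized.
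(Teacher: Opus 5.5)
Your first two cases are correct, but the third case ($\int_E\Vert y\Vert_U\diff t>\beta$ with $|F|>0$) is left open. It cannot be closed: with the constants $1$ and $\tfrac12$ the inequality is false for every finite $p>2$ and every $\delta>0$, and the counterexamples fall exactly in that case. Take $U=\mathbb R$, $0<\varepsilon<\min(\delta,\tfrac12)$, $c>\max(\delta,1)$ and $T=1+\eta$. Set $y=\varepsilon$ on $[0,1]$ and $y=c$ on $(1,1+\eta]$. Then
\[
\int_0^T\mathcal J(\Vert y(t)\Vert_U)\diff t=\varepsilon^2+c\eta<\varepsilon^2+c^2\eta=\Vert\mathbf P_Ty\Vert_2^2 .
\]
As $\eta\to0$, both $\Vert\mathbf P_Ty\Vert_2$ and $\Vert\mathbf P_Ty\Vert_p$ tend to $\varepsilon$, so the second entry of the minimum tends to $\tfrac12\varepsilon>\varepsilon^2$. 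Hence for small $\eta$ the minimum equals $\Vert\mathbf P_Ty\Vert_2^2$ and strictly exceeds the left-hand side. For instance, $\delta=1$, $p=4$ ($\theta=\tfrac13$), $\varepsilon=\tfrac14$, $c=2$, $\eta=10^{-5}$ gives $0.06252$ on the left against $\min(0.06254,\,0.1226)$ on the right.

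The mechanism is a tall spike on a tiny set. It raises $\Vert\mathbf P_Ty\Vert_2^2$ faster than $\int\mathcal J$ because $c^2>c$, but barely changes $\Vert\mathbf P_Ty\Vert_p$ when $p<\infty$. So no normalization of $\delta$, in particular $\delta\ge1$, can rescue the stated constants. They do hold for $p=\infty$, by a direct argument. If $M:=\Vert\mathbf P_Ty\Vert_\infty\le1$, then $\mathcal J(\Vert y\Vert_U)\ge\Vert y\Vert_U^2$ pointwise. If $M>1$, then $\mathcal J(\Vert y\Vert_U)\ge\Vert y\Vert_U^2/M$ pointwise.

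What your first attempt proves is
\[
\int_0^T\mathcal J(\Vert y(t)\Vert_U)\diff t\ge\min\Bigl(\tfrac12\Vert\mathbf P_Ty\Vert_2^2,\;2^{-\frac{1}{2\theta}}\,\frac{\Vert\mathbf P_Ty\Vert_2^{1/\theta}}{\Vert\mathbf P_Ty\Vert_p^{(1-\theta)/\theta}}\Bigr).
\]
This is the correct version, and it is exactly what the paper's own argument delivers. The paper splits on whether $\int_{\Omega_1}\Vert y\Vert_U^2\diff t\ge\tfrac12\Vert\mathbf P_Ty\Vert_2^2$ (your $a\ge b$ versus $a<b$) and uses the same H\"older interpolation on $\Omega_2$. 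However, its first case only gives $\tfrac12\Vert\mathbf P_Ty\Vert_2^2$. In its second case, the interpolation inequality should read $\Vert y\Vert_{\Omega_2^2}\le\Vert y\Vert_{\Omega_2^1}^{\theta}\Vert y\Vert_{\Omega_2^p}^{1-\theta}$, without the square. Raising $\Vert y\Vert_{\Omega_2^2}^2>\tfrac12\Vert\mathbf P_Ty\Vert_2^2$ to the power $\tfrac1{2\theta}>1$ then gives the factor $2^{-1/(2\theta)}<\tfrac12$, not $\tfrac12$. So stop after your first attempt and state the proposition with these constants. This weaker form is all that the proof of Theorem~\ref{thm:SUGAS} uses, since every constant there is absorbed into $m_*$.
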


\begin{proof}
Given any $T>0$, we denote by $\Omega_i$ (with $i\in\lbrace 1,2\rbrace$) the following subsets of the interval $[0,T]$
$$
\Omega_1:=\lbrace t\in [0,T]\mid \Vert y(t)\Vert_U\leq \delta\rbrace,\quad \Omega_2:=\lbrace t\in [0,T]\mid \Vert y(t)\Vert_U > \delta\rbrace
$$

By definition of $\mathcal J$, we have
\begin{equation}
\label{eq:decomp-jauge}
\int_0^T \mathcal{J}(\Vert y\Vert_U)\diff t=\int_{\Omega_1} \Vert y\Vert^2 \diff t + \int_{\Omega_2} \Vert y\Vert \diff t.
\end{equation}
Obviously, the following decomposition holds
\begin{equation}
\label{eq:decomp-obvious}
\int_0^T \Vert y\Vert^2_U \diff t = \int_{\Omega_1} \Vert y\Vert^2 \diff t + \int_{\Omega_2} \Vert y\Vert^2 \diff t
\end{equation}
For any signals $y\in L_{\mathrm{loc}}^p([0,+\infty);U)\cap \mathfrak U$, and for $i\in \lbrace 1,2\rbrace$, we use the notation
$$
\Vert y\Vert^p_{\Omega_i^p}:=\int_{\Omega_i} \Vert y(t)\Vert_U^p \diff t.
$$

Now, we are ready to prove the desired inequality \eqref{eq:jauge-inequality-prop}. To do so, we suppose that
$\Vert y\Vert^2_{\Omega_1^2} \geq \frac{1}{2} \Vert \mathbf P_T y\Vert_{2}^2$. Then, one has

\begin{equation}
\label{eq:obvious}
\int_0^T \mathcal J(\Vert y(t)\Vert_U) \diff t \geq \frac{1}{2}\Vert \mathbf P_T y\Vert_{2}^2,
\end{equation}
which proves one of the terms of the desired inequality.

Now, suppose that $\int_{\Omega_1} \Vert y(t)\Vert^2 \diff t < \frac{1}{2} \Vert \mathbf P_T y\Vert_{2}^2.$ Therefore, using \eqref{eq:decomp-obvious}, one has
$$
\Vert y\Vert_{\Omega_2^2}^2 =  \Vert \mathbf P_T y\Vert_2^2 - \Vert y\Vert_{\Omega_1^2}^2 >\frac{1}{2}\Vert \mathbf P_T y\Vert_2^2
$$

Moreover, invoking the Riesz-Thorin's theorem \cite[Theorem
1.1.1, Page 2]{bergh2012interpolation}, we obtain
$$
\Vert y\Vert_{\Omega_2^2}^2 \diff t \leq \Vert y\Vert_{\Omega_2^1}^{\theta} \Vert y\Vert^{1-\theta}_{\Omega_2^p},
$$
with $\theta:=\frac{p-2}{2(p-1)}$. Therefore, by assumption, we have
$$
\Vert y\Vert_{\Omega_2^1}\geq \frac{\Vert y\Vert_{\Omega_2^2}^{\frac{1}{\theta}}}{\Vert y\Vert^{\frac{1-\theta}{\theta}}_{\Omega_2^p}} >\frac{1}{2}\frac{\Vert \mathbf P_Ty\Vert_{2}^{\frac{1}{\theta}}}{\Vert \mathbf P_Ty\Vert^{\frac{1-\theta}{\theta}}_{p}}.
$$
Recall that \eqref{eq:decomp-jauge} holds. Therefore, we obtain that, for all $T>0$
$$
\int_0^T \mathcal J(\Vert y\Vert_U)\diff t >  \frac{1}{2}\frac{\Vert \mathbf P_Ty\Vert_{2}^{\frac{1}{\theta}}}{\Vert \mathbf P_Ty\Vert^{\frac{1-\theta}{\theta}}_{p}}.
$$
The latter result together with \eqref{eq:obvious} yields the desired inequality, which therefore concludes the proof.
\end{proof}

Our goal is to provide our first main result, which is based on Theorem \ref{thm:hastir}. We recall that it is extracted from \cite{hastir2025well}. Note that from Theorem \ref{thm:hastir}, we have an $L^\infty$-bound on the output $y$ and this $L^\infty$-bound depends on the initial state $z_0 \in D(A_\phi)$, i.e., for almost every $t\geq 0$,
\begin{equation}
\label{eq:pointwise-output}
\Vert y(t)\Vert_U \leq \Vert z_0\Vert_H + \Vert A_{\phi}(z_0)\Vert_H.
\end{equation}
The term appearing at the right hand side does not need to be a norm, because $A_\phi$ is a nonlinear operator, unless $B$ and $C$ are supposed to be bounded. Therefore, we have the following theorem
\begin{theorem}
\label{thm:SGES}
Consider \eqref{eq:cl-sat} with $v=\phi(y)$ and $d=r=0$. Suppose that the assumptions of Theorem \ref{thm:hastir} are satisfied, and for any $R>0$, suppose that $\Vert z_0\Vert_H + \Vert A_{\phi}(z_0)\Vert_H\leq R$. Recall that the output operator $C_\kappa$ is exactly observable in infinite-time for $\mathbb S$. Therefore, the origin of \eqref{eq:cl-sat} is SGES. 
\end{theorem}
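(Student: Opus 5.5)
The plan is to reduce the nonlinear closed loop to a uniform decay estimate on a fixed time window, using three ingredients: the energy dissipation, the pointwise bound \eqref{eq:pointwise-output} on the output, and exact observability of $C_\kappa$ for $\mathbb S$.

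\textbf{Step 1 (energy and a quadratic dissipation bound).} With $d=r=0$, the incremental inequality \eqref{eq:incremental} (take $z_{02}=0$) gives
\[
\Vert z(T)\Vert_H^2 - \Vert z_0\Vert_H^2 \leq -2\int_0^T \langle \sigma(\kappa y),y\rangle_U \diff t ,
\]
so $t\mapsto\Vert z(t)\Vert_H$ is nonincreasing. By \eqref{eq:pointwise-output} and $\Vert z_0\Vert_H+\Vert A_\phi(z_0)\Vert_H\leq R$, we have $\Vert y(t)\Vert_U\leq R$ for a.e.\ $t$. Since $\mathcal S=U$, \eqref{eq:jauge} gives $\langle\sigma(\kappa y),y\rangle_U\geq \underline c_\sigma\mathcal J(\kappa\Vert y\Vert_U)/\kappa$. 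On $[0,R]$ the function $\mathcal J$ dominates a multiple of $r^2$: if $\kappa r\geq\delta$ then $\kappa r\geq \kappa^2 r^2/(\kappa R)$. Hence there is $c_R>0$ with
\[
\Vert z(T)\Vert_H^2 - \Vert z_0\Vert_H^2 \leq -c_R\Vert \mathbf P_T y\Vert_2^2 .
\]
This is exactly the point where the $L^\infty$ bound removes the $L^1$/$L^2$ mismatch. Proposition \ref{prop:jauge} with $p=\infty$ would give the same conclusion, but the direct argument is simpler.

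\textbf{Step 2 (observability transfer).} Write the solution through \eqref{eq:cl-sat} with $v=\kappa y-\sigma(\kappa y)$:
\[
\mathbf P_T y=\tilde\Psi_T z_0+\tilde{\mathbb F}_T\mathbf P_T v .
\]
Since exact observability in infinite time together with exponential stability of $\mathbb S$ yields exact observability in some finite time $T$, we get
\[
k\Vert z_0\Vert_H\leq \Vert \mathbf P_T y\Vert_2+\Vert\tilde{\mathbb F}_T\Vert\,\Vert \mathbf P_T v\Vert_2 .
\]
The key is to bound $\Vert v(t)\Vert_U=\Vert \kappa y-\sigma(\kappa y)\Vert_U$ by a multiple of $\langle \sigma(\kappa y),y\rangle_U$ up to a constant depending on $R$. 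I would use property (v) with $s=\kappa y$, which gives $\Vert\sigma(\kappa y)-\kappa y\Vert_U\leq\langle\sigma(\kappa y),\kappa y\rangle_U$. Squaring and using $\langle\sigma(\kappa y),\kappa y\rangle_U\leq \kappa R\,C$ (a consequence of the pointwise bound and \eqref{eq:jauge}), this gives $\Vert v\Vert_U^2\leq C_R\langle\sigma(\kappa y),y\rangle_U$. Hence $\Vert\mathbf P_T v\Vert_2^2\leq C_R(\Vert z_0\Vert_H^2-\Vert z(T)\Vert_H^2)$. Combining with Step 1,
\[
\Vert z_0\Vert_H^2\leq K_R\left(\Vert z_0\Vert_H^2-\Vert z(T)\Vert_H^2\right),
\]
that is, $\Vert z(T)\Vert_H^2\leq(1-1/K_R)\Vert z_0\Vert_H^2$.

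\textbf{Step 3 (iteration).} Since $z(t)\in D(A_\phi)$ and, by contractivity of the nonlinear semigroup, $\Vert z(t)\Vert_H+\Vert A_\phi z(t)\Vert_H$ remains bounded by $R$ (the standard estimate $\Vert A_\phi z(t)\Vert_H\leq\Vert A_\phi z_0\Vert_H$ from maximal monotone theory), the same constant $K_R$ applies on each window $[nT,(n+1)T]$. Iterating gives $\Vert z(t)\Vert_H\leq \psi(R)e^{-\lambda_R t}\Vert z_0\Vert_H$, which is SGES.

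The main obstacle is Step 2. One has to control the dead-zone input $v$ quadratically by the dissipation uniformly in the ball of radius $R$. One also has to make sure that the finite-time observability constant and $\Vert\tilde{\mathbb F}_T\Vert$ do not interfere: if $\Vert\tilde{\mathbb F}_T\Vert$ is large, the constant $K_R$ simply absorbs it.
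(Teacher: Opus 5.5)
Your proposal is correct and follows essentially the same route as the paper's proof: the $L^\infty$ bound \eqref{eq:pointwise-output} converts the $\mathcal J$-type dissipation into an $L^2$ one, exact observability of $C_\kappa$ (for $T$ large) is transferred to the nonlinear output via $\tilde\Psi_T z_0=\mathbf P_T y-\tilde{\mathbb F}_T\mathbf P_T v$, and the resulting one-window contraction is iterated. The differences are only cosmetic. You control $\mathbf P_T v$ through property (v), where the paper just uses that the dead-zone is globally Lipschitz. You justify uniformity over windows via $\Vert A_\phi z(t)\Vert_H\le\Vert A_\phi z_0\Vert_H$, although the global-in-time bound on $y$ already suffices. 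Finally, your energy inequality really comes from impedance passivity with $u=-\sigma(\kappa y)$ rather than from \eqref{eq:incremental}.
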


\begin{proof}
In the \textbf{first step}, we apply Proposition \ref{prop:jauge} for $p=\infty$. We can check that $\theta=\frac{1}{2}$ in this case. We therefore have
$$
\int_0^T \mathcal J(\Vert y\Vert_U)\diff t \geq C_R \Vert \mathbf P_T y\Vert^2_2,
$$
where $C_R:=\min\left(1,\frac{1}{2R}\right).$

In a \textbf{second step}, we are going to translate the observability property of the linear system (i.e., \eqref{eq:cl-sat} with $v=0$) to the nonlinear system (when $v\neq 0$). Recall that, since $d=r=0$, we have
$$
\Vert z(t)\Vert_H^2 \leq \Vert z_0\Vert^2_H -\underline{c}_\sigma \int_0^T \mathcal J(\Vert y(t)\Vert_U)\diff t \leq \Vert z_0\Vert^2_H - C_R \Vert \mathbf P_T y\Vert^2_2.
$$
We know that $\tilde \Psi_\infty z_0$ is coercive on an infinite time horizon, since $C_\kappa$ is exactly observable in infinite time for $\mathbb S$. We want to translate the property of $\tilde \Psi_\infty$ (coming from the linear system) to $y$ (coming from the nonlinear system). Recall that
$$
\Psi_\infty z_0 = y(t) + \tilde{\mathbb F}_\infty \phi(\kappa y), 
$$
as stated in \eqref{eq:fixed-point1}. Since the system in \eqref{eq:cl-sat} is exponentially stable when $v=0$, the operator $\tilde{\mathbb F}_\infty$ is uniformly bounded in $L^2([0,\infty);U)$ as stated in \cite{weiss2000optimizability,weiss1994transfer}. Moreover, using the fact that $\phi$ is globally Lipschitz in $U$, we obtain:
$$
\Vert \mathbf P_T \Psi_\infty z_0\Vert_{\mathfrak U} \leq c_{\mathbb F} \Vert \mathbf P_T y\Vert_{\mathfrak U}.
$$
This together with the exact observability property implies that
\begin{equation}
\label{eq:observ-nl}
\Vert \mathbf P_T y\Vert_{\mathfrak U} \geq k_C \Vert z_0\Vert_H, 
\end{equation}
for all $T$ (note that $k_C$ depends on $T$). 

In the \textbf{third step}, we show the desired result. Denote by $E_n=\Vert z(nT)\Vert_H^2$. We can prove that
$$
E_{n+1} \leq (1-k_C^2 C_R) E_n.
$$
We know that, for a time $T$ large enough, we have $1-k_C^2C_R \in [0,1]$. It can be proved by induction that, for all $n\in \mathbb N$, $E_{n}\leq (1-k_C^2 C_R)^n E_0$. Since $(1-k_C^2 C_R)^n \leq e^{-\frac{k_C^2 C_R nT}{T}}$, we have $E_n \leq e^{-k_C^2 C_R n} E_0$. Recalling that $E_{n+1} \leq E_n$ (because the system is dissipative), we obtain that, for all $t\in [n,(n+1)T]$ $E(t)\leq E(nT)\leq e^{-\frac{k_C^2 C_R nT}{T}} E_0$. Since $t\leq (n+1)T$, we have $nT \geq t-T$. Therefore, we have, for all $t\in [nT,(n+1)T]$
\begin{equation}
\Vert z(t)\Vert^2_H \leq e^{k_C^2 C_R}e^{-\frac{k_C^2 C_R t}{T}} \Vert z_0\Vert^2_H,
\end{equation}
which proves the desired result, because $n$ and $T$ were chosen arbitrarily. 
\end{proof}

In the multi-dimensional case as explained in the example provided earlier, the coercivity with respect to one frequency $\lambda$ cannot hold in general, see, e.g., Remark \ref{rem:lack-coercivity}.  In general, $L^\infty$-bounds on the output is impossible. In light of Proposition \ref{prop:jauge}, one would like to obtain $L^p$-bounds. However, it is worth noting that $L^p$-spaces are not convenient if one is willing to use classical results in Hilbert spaces. As noted in \cite{paunonen2024admissibility}, a very useful space is $ H^\eta([0,\infty);U)$ with $\eta\in [0,1/2)$. By convention, $\eta=0$ corresponds to the $L^2$-space. This particular space can be defined as an interpolation of $H^1_0([0,\infty);U)$ and $L^2([0,\infty);U)$ as stated in \cite[Lemma A.10]{paunonen2024admissibility} (but it was already discussed in \cite{lions2012non} or \cite{di2012hitchhiker}). The space $H^\eta([0,\infty);U)$ is endowed with the norm

$$
\Vert u\Vert_{H^\eta}:=\Vert u\Vert_{L^2} + [u]_{H^\eta},
$$
where $[u]_{H^\eta}$ is the so-called \textit{semi-norm of Gagliardo}, defined as follows:
\begin{equation}
\label{def:semi-norm}
[u]^2_{H^\eta}:=\int_0^\infty \int_0^\infty \frac{\Vert u(t)-u(s)\Vert^2_U}{|t-s|^{1+2\eta}} \diff t\diff s,
\end{equation}
see \cite{di2012hitchhiker} for a nice introduction to fractional Sobolev spaces. The total norm of $H^\eta$ is known as the \textit{Sobolev--Slobodeckii norm}. We can define the same norm on a finite time-interval $[0,T]$ for any $T>0$ (at the price of adding the truncation operator, for instance). The notation will remain the same, except if the situation allows any confusion.

Another interpolated space of interest is the following one:
$$
H_\eta:=[H,D(A)]_{\eta}
$$
endowed with the norm:
$$
\Vert z\Vert_{\eta}:=\Vert z\Vert_{[H,D(A)]_\eta}.
$$
Because $A$ is densely defined in $H$, the interpolation can be done with methods described in \cite[Chapter 2 $\&$ 4]{bergh2012interpolation}. It is worth noting that, unless $A$ is sectorial, it is not possible to define it in a more abstract way.

Before considering the statement and the proof of our first main result, we will derive a regularity property (in the fractional space above mentioned) of the following fixed-point equation
\begin{equation}
\label{eq:fp-output}
y(t) = \tilde \Psi_\infty z_0 - \tilde{\mathbb{F}}_\infty(\phi(\kappa y(t))), 
\end{equation}
which comes from \eqref{eq:fixed-point} taking $d=r=0$. 

The following result shows that, as soon as $z_0\in H_\eta$, then the fractionnal regularity for the linear system \eqref{eq:cl-sat} (i.e., when $v$ is arbitrary) is conserved through the trajectories. 

\begin{lemma}
\label{lem:conserved-regularity}
Consider the system \eqref{eq:cl-sat} with $v\in \mathfrak U$ arbitrary, i.e., the system is linear. Let $T>0$ be an arbitrary positive constant. Suppose that $z_0\in H_\eta$ and $u\in H^\eta([0,T];U)$. Therefore $z\in C([0,T];H_\eta)$ and $y\in H^\eta([0,T];U)$.
\end{lemma}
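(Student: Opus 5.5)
Our plan is to prove the lemma by interpolation between the two endpoint cases $\eta=0$ and $\eta=1$. Both the state and output maps of the linear closed-loop system \eqref{eq:cl-sat} are linear in $(z_0,v)$. So it suffices to show that the map
$$
\mathcal M_T:(z_0,v)\mapsto (z,\mathbf P_T y)=\big(\mathbb S_\cdot z_0+\tilde\Phi_\cdot v,\ \tilde\Psi_T z_0+\tilde{\mathbb F}_T v\big)
$$
is bounded from $H\times L^2([0,T];U)$ to $C([0,T];H)\times L^2([0,T];U)$. It must also be bounded from a suitable space of smooth, compatible data to $C([0,T];H_1)\times H^1([0,T];U)$. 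The complex (or real) interpolation functor of \cite{bergh2012interpolation} then gives boundedness between the intermediate spaces. The $\eta=0$ endpoint is exactly the well-posedness of the closed-loop system, i.e. Assumption \ref{ass:regular} together with the energy estimate derived after Definition \ref{def:well-posed}.

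For the upper endpoint we use the classical smoothness of regular systems. If $z_0\in D(A_\kappa)$ and $v\in H^1([0,T];U)$ with the compatibility condition $A_\kappa z_0+B_\kappa v(0)\in H$, then $z\in C^1([0,T];H)$. Moreover $\dot z$ is the mild solution with initial datum $A_\kappa z_0+B_\kappa v(0)$ and input $\dot v$, and $\dot y$ is the corresponding output, by time-invariance and differentiation of the mild formula (as in \cite{tucsnak2014well,staffans2005well}). Hence $\|y\|_{H^1}\lesssim \|z_0\|_{H_1}+\|v\|_{H^1}$, with the compatibility term controlled through $\|B_\kappa v(0)\|_{H_{-1}}$.

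To remove the compatibility constraint, which does not survive interpolation nicely, we use the restriction $\eta<1/2$. For $\eta<1/2$ we have $H^\eta([0,T];U)=[L^2,H^1_0]_\eta$ by \cite[Lemma A.10]{paunonen2024admissibility}, so we interpolate the input map only on $H^1_0$ inputs, where $v(0)=0$. Then compatibility reduces to $z_0\in D(A_\kappa)$. We therefore split the data by linearity: $(z_0,0)$ and $(0,v)$ are treated separately. The free part $z_0\mapsto(\mathbb S_\cdot z_0,\tilde\Psi z_0)$ is bounded $H\to C(H)\times L^2$ and $D(A_\kappa)\to C(D(A_\kappa))\times H^1$. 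The forced part $v\mapsto(\tilde\Phi v,\tilde{\mathbb F}v)$ is bounded $L^2\to C(H)\times L^2$ and $H^1_0\to C(D(A_\kappa))\times H^1_0$; the latter holds because $\tilde\Phi_t v=A_\kappa^{-1}(\tilde\Phi_t\dot v - B_\kappa v(t))$ up to the standard identity. Interpolating each part gives $z\in C([0,T];[H,D(A_\kappa)]_\eta)$ and $y\in H^\eta([0,T];U)$. Continuity in time in the interpolation space follows from density of smooth data and the uniform bound.

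The main obstacle is identifying the interpolation space. The lemma is stated with $H_\eta=[H,D(A)]_\eta$, whereas the argument naturally produces $[H,D(A_\kappa)]_\eta$, and $D(A)\neq D(A_\kappa)$ in general for unbounded $B,C$. We would first try to show that these coincide for $\eta<1/2$. Both $D(A)$ and $D(A_\kappa)$ sit in $H_1$ of their respective generators and differ by a perturbation $B_\kappa C_\Lambda$ whose range lies in $\mathrm{Ran}\,B$. The interpolation spaces of order below $1/2$ should be insensitive to such admissible boundary perturbations, in the spirit of the results of \cite{paunonen2024admissibility,guiver2024mixed}. If this identification fails, we would simply read $H_\eta$ as defined via the closed-loop generator $A_\kappa$, which is the space the rest of the section actually uses.
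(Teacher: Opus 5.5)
Your route is the same as the paper's. You split the data by linearity, interpolate $\tilde\Psi$ between $H\to L^2$ and $D(A_\kappa)\to H^1$, and interpolate $(\tilde\Phi,\tilde{\mathbb F})$ between $L^2$ and $H^1_0$ inputs via $H^\eta=[L^2,H^1_0]_\eta$ for $\eta<1/2$. Your fallback of reading $H_\eta$ as $[H,D(A_\kappa)]_\eta$ is also what the paper's proof implicitly does. The free state, the free output and the forced output are fine. For $v\in H^1_0$, time-invariance gives $\frac{\diff}{\diff t}\tilde{\mathbb F}_\infty v=\tilde{\mathbb F}_\infty \dot v$, so $\tilde{\mathbb F}$ maps $H^1_0$ into $H^1_0$ and $y\in H^\eta([0,T];U)$ follows.

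The genuine gap is the forced-state endpoint: the claim that $\tilde\Phi_t$ is bounded from $H^1_0([0,T];U)$ into $D(A_\kappa)$, and likewise the target $C([0,T];H_1)$ in your first paragraph. Your own identity refutes it. In $\tilde\Phi_t v=A_\kappa^{-1}\tilde\Phi_t\dot v-A_\kappa^{-1}B_\kappa v(t)$ the first term lies in $D(A_\kappa)$. The second lies in $D(A_\kappa)$ only if $B_\kappa v(t)\in H$, and for boundary control ($\mathrm{Ran}\,B_\kappa\cap H=\{0\}$) this forces $B_\kappa v(t)=0$. In Example~\ref{example:wave}, $z(t)$ carries an extra Neumann boundary term driven by $v(t)$ and is not in $D(A_\kappa)$; all one has is $A_\kappa z(t)+B_\kappa v(t)\in H$. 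The paper's Step~2 asserts the same inclusion, so the flaw is inherited, but as it stands neither argument gives $z\in C([0,T];H_\eta)$.

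Interpolating into the correct target $D(A_\kappa)+A_\kappa^{-1}B_\kappa U$ does not directly help either. Admissibility yields $\Vert(\lambda-A_\kappa)^{-1}B_\kappa\Vert\lesssim\lambda^{-1/2}$, hence $K(s,A_\kappa^{-1}B_\kappa u)\lesssim s^{1/2}\Vert u\Vert_U$. So this target sits only in $(H,D(A_\kappa))_{1/2,\infty}$, and reiteration gives order $\eta/2$ instead of $\eta$.

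One possible repair is a direct $K$-functional estimate at fixed $t$. Split $v=(v-v_s)+v_s$ with $v_s$ a causal mollification at scale $s$, so $v_s(0)=0$. Then control three pieces:
\begin{itemize}
\item $\tilde\Phi_t(v-v_s)$ by $\Vert v-v_s\Vert_{L^2}$;
\item the $D(A_\kappa)$-part of $\tilde\Phi_t v_s$ by $s\Vert\dot v_s\Vert_{L^2}$;
\item the boundary-layer term $A_\kappa^{-1}B_\kappa v_s(t)$ by $s^{1/2}\Vert v_s(t)\Vert_U$.
\end{itemize}
The weighted and fractional Hardy inequalities, valid for $\eta<1/2$, should then give $\int_0^1 s^{-2\eta}\Vert v_s(t)\Vert_U^2\,\diff s\lesssim\Vert v\Vert_{H^\eta}^2$ uniformly in $t$. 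This would yield $z(t)\in(H,D(A_\kappa))_{\eta,2}=[H,D(A_\kappa)]_\eta$.
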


\begin{proof}
In the \textbf{first step}, we show that, as soon as $z_0\in H_\eta$, then $\tilde{\Psi}_\infty z_0\in H^\eta([0,T];U)$. We already know that $\tilde\Psi_\infty$ is bounded in $L^2([0,T];U)$ for all $T>0$, since $\mathbb S$ is an exponentially stable semigroup. Suppose now that $z_0\in D(A_\kappa)$ (with $A_\kappa$ defined in \eqref{eq:new-generators}). Therefore, we have
$$
\frac{\diff}{\diff t} \Psi_\infty z_0=C_\kappa \frac{\diff}{\diff t}\mathbb S_t z_0=C_\kappa A_\kappa \mathbb S_t z_0=C_\kappa \mathbb S_t A_\kappa z_0.
$$
Therefore, it can be proved that 
$$
\Vert \Psi_\infty z_0\Vert_{H^1_0([0,T];U)} \leq K_{C_\kappa} \Vert z_0\Vert_{D(A_\kappa)},
$$
which shows that this operator is also bounded in $H^1_0([0,T];U)$. By interpolation, using \cite[Appendix A]{paunonen2024admissibility}, we can prove the desired result.

In the \textbf{second step}, given any $T>0$, we show that, for all $v\in H^\eta([0,T];U)$, $\tilde \Phi_t v\in H_\eta$ and $\tilde{\mathbb F}_{\infty} v\in H^{\eta}([0,T];U)$.

We first consider an input $v\in H^1_0([0,T];U)$. We denote by $v^\prime$ its weak derivative. Therefore, we can show that, for almost every $t\in [0,T]$

$$
\tilde\Phi_t v^\prime = \int_0^t \mathbb S_{t-s}B_\kappa v^\prime(s)\diff s. 
$$
Performing an integration by parts, we obtain
$$
\tilde \Phi_t v^\prime = B_\kappa v(t) + A_\kappa \tilde \Phi_t v=\frac{\diff}{\diff t} \tilde \Phi_t v,
$$
where, in the first equality, we have used the fact that $v(0)=0$ and in the last equality, we have used the strong form of \eqref{eq:cl-sat} when $z_0=0$, i.e., $z(t)= \Phi_t v$, for all $t\in [0,T]$. Using the infinite-time admissible of $B_\kappa$ for $\mathbb S$, it is clear that $\tilde \Phi_t v\in D(A_\kappa)$ for all $v\in H^1_0([0,T];U)$. By interpolation, we show that $\tilde \Phi_t v\in H_\eta$ for all $v\in H^\eta ([0,T];U)$. 

Recall that $\tilde {\mathbb F}_\infty$ is represented as follows $
\mathbb F_\infty v(t):=C_\Lambda \tilde\Phi_t v + Dv(t).$
Suppose that we consider the weak derivative of $v$. Then, for almost every $t\geq 0$
\begin{equation}
\label{eq:input-output-derivative}
\begin{aligned}
\tilde{\mathbb F}_\infty v^\prime(t)= C_\kappa \tilde \Phi_t v^\prime + D_\kappa v^\prime(t) = C_\kappa \frac{\diff}{\diff t} \tilde \Phi_t v^\prime + D_\kappa v^\prime(t) = \frac{\diff}{\diff t}(\tilde{\mathbb F}_\infty v(t)).
\end{aligned}
\end{equation}
With the same strategy than before, we can show that $\tilde{\mathbb F}_\infty$ is bounded from $H^1_0$ to $H^1_0$. Therefore, using the interpolation theory available in \cite[Appendix A]{paunonen2024admissibility}, we can prove $\tilde{\mathbb F}_\infty v\in H^\eta([0,T];U)$ for all $v\in H^\eta([0,T];U)$. Finally, since $H_\eta$ is invariant to the semigroup $\mathbb S$ (see e.g., \cite[Section V]{engel2000one}), the result is proved.\end{proof}

The following result demonstrates that the output $y$ inherits the regularity of $\tilde \Psi_\infty z_0$.
\begin{lemma}
\label{lem:l-nl}
Given any $T>0$, we have, for all $z_0\in H_\eta$, the function $t\mapsto \Psi_\infty z_0$ is in $H^\eta([0,T];U)$, with $0\leq \eta <\frac{1}{2}$. Therefore, the unique solution of \eqref{eq:fp-output} $y$ satisfies $y\in H^\eta([0,T];U)$.
\end{lemma}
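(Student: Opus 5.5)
The first assertion is exactly Step~1 of the proof of Lemma~\ref{lem:conserved-regularity}. For $z_0\in D(A_\kappa)$ one has $\frac{\diff}{\diff t}\tilde\Psi_\infty z_0=\tilde\Psi_\infty A_\kappa z_0$, so $\tilde\Psi_\infty\in\mathcal L(D(A_\kappa),H^1([0,T];U))$. Moreover $\tilde\Psi_\infty\in\mathcal L(H,L^2([0,T];U))$ by infinite-time admissibility of $C_\kappa$ for the exponentially stable $\mathbb S$. Interpolating with \cite[Appendix A]{paunonen2024admissibility} gives $\tilde\Psi_\infty\in\mathcal L(H_\eta,H^\eta([0,T];U))$ for $0\le\eta<\frac12$. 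For $\eta<\frac12$ the spaces $H^\eta$ and $H^\eta_0$ coincide, so the boundary value at $t=0$ causes no trouble.

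For the nonlinear output I would not run a fixed-point argument in $H^\eta$. The operator $\tilde{\mathbb F}_\infty$ is not contractive there, and Nemytskii operators are not smooth enough in $H^\eta$ for such an argument. Instead I would estimate the Gagliardo seminorm \eqref{def:semi-norm} directly through time translates. Up to a constant,
\begin{equation*}
[y]^2_{H^\eta(0,T)}\simeq\int_0^T\frac{\Vert y(\cdot+h)-y\Vert^2_{L^2(0,T-h;U)}}{h^{1+2\eta}}\diff h .
\end{equation*}
By time invariance and uniqueness (Theorem~\ref{thm:mild}), $y(\cdot+h)$ is the output of the nonlinear closed loop started at $z(h)$. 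The incremental strict output passivity \eqref{eq:incremental}, applied with $d_1=d_2=r_1=r_2=0$, then yields
\begin{equation*}
\Vert y(\cdot+h)-y\Vert^2_{L^2(0,T-h;U)}\le \mu^{-1}\Vert z(h)-z_0\Vert_H^2 .
\end{equation*}
So everything reduces to showing $\int_0^T h^{-1-2\eta}\Vert z(h)-z_0\Vert_H^2\,\diff h<\infty$.

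I would split this quantity using \eqref{eq:fixed-point1}:
\begin{equation*}
z(h)-z_0=(\mathbb S_hz_0-z_0)-\tilde\Phi_h\phi(\kappa y).
\end{equation*}
For the linear term, the $K$-method characterization of $[H,D(A_\kappa)]_\eta$ applies. I would use the semigroup description of the real interpolation space, which contains the complex one. This gives $\int_0^\infty h^{-1-2\eta}\Vert\mathbb S_hz_0-z_0\Vert^2\diff h\lesssim\Vert z_0\Vert_\eta^2$.

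For the second term, infinite-time admissibility of $B_\kappa$ and the global Lipschitz property of $\phi$ give $\Vert\tilde\Phi_h\phi(\kappa y)\Vert_H\le c\Vert y\Vert_{L^2(0,h;U)}$. Fubini then reduces its contribution to the weighted norm
\begin{equation*}
\int_0^T h^{-1-2\eta}\Vert y\Vert^2_{L^2(0,h)}\diff h=\frac1{2\eta}\int_0^T\Big(s^{-2\eta}-T^{-2\eta}\Big)\Vert y(s)\Vert_U^2\diff s .
\end{equation*}

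The main obstacle is controlling this Hardy-type weighted norm of $y$ near $t=0$ without circularity. The fractional Hardy inequality valid for $\eta<\frac12$ bounds it by $\Vert y\Vert_{H^\eta}^2$, which is exactly what we want to prove.

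To break the circle, I would first try a truncation/absorption argument. Work on a short interval $[0,\tau]$ and split $y=\tilde\Psi_\infty z_0+w$ with $w:=-\tilde{\mathbb F}_\infty\phi(\kappa y)$. The linear part has finite Hardy norm by the first paragraph. For $w$, the Hardy constant combined with the small-time norm of $\tilde{\mathbb F}$ on $L^2(0,\tau)$ weighted by $s^{-2\eta}$ should be below $1$ for $\tau$ small, since a regular system has feedthrough $D_\kappa$ plus a part that vanishes as $\tau\to0$. This would let me absorb the term, perhaps via a priori estimates on the regularized data $z_0\in D(A_\kappa)$ followed by a density argument. Away from $0$, on $[\tau,T]$, the weight is bounded and the $L^2$ bound from dissipativity suffices.

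If the absorption fails because $D_\kappa$ is too large, the fallback is to shift the base point. One compares $y$ with the output generated from $z(\tau)$ instead of $z_0$, and uses that $z(\tau)$ stays in a set controlled by $\Vert z_0\Vert_\eta$. This last fact would itself need proof, and is where I expect the real difficulty.
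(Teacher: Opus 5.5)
You have a genuine gap, and it is exactly the one you flag. Your reduction is correct as far as it goes: the left-translate form of the seminorm, restarting at $z(h)$, and $\|y(\cdot+h)-y\|^2_{L^2(0,T-h;U)}\le\mu^{-1}\|z(h)-z_0\|_H^2$ from \eqref{eq:incremental}. But comparing trajectories with different \emph{initial states} forces you to estimate $\tilde\Phi_h\phi(\kappa y)$. The admissibility bound $\|\tilde\Phi_h\phi(\kappa y)\|_H\lesssim\|y\|_{L^2(0,h;U)}$ turns this into the Hardy-weighted norm $\int_0^T s^{-2\eta}\|y(s)\|_U^2\,\diff s$ of the \emph{nonlinear} output, which is as strong as the conclusion.

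Neither of your repairs is justified. The absorption step needs two things that are not available:
\begin{itemize}
\item boundedness of $\tilde{\mathbb F}_\infty$ on $L^2((0,\tau),s^{-2\eta}\diff s;U)$, which admissibility and regularity do not provide;
\item smallness of $\tilde{\mathbb F}_\infty-D_\kappa$ in operator norm on $L^2(0,\tau;U)$ as $\tau\to0$. Regularity only gives \emph{strong} convergence of the transfer function to the feedthrough. For example, the diagonal system on $\ell^2$ with entries $n/(s+n)$ is regular with $D=0$, yet $\|\mathbf H(\lambda)\|=1$ for every $\lambda>0$.
\end{itemize}
The fallback presupposes that $z(\tau)$ stays controlled in $H_\eta$ along the nonlinear flow. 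In the paper this is a \emph{consequence} of the present lemma: it comes from Lemma~\ref{lem:conserved-regularity} with $v=-\phi(\kappa y)\in H^\eta$, via Lemma~\ref{lem:bound-sigma-eta}. In any case, restarting at $z(\tau)$ regenerates the same singular weight at the new base point.

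The missing idea, which is the paper's, is to translate to the \emph{right} with zero extension, so the initial state never moves. Let $g:=\tilde\Psi_\infty z_0$, and let $(S_hf)(t)=f(t-h)$ for $t\ge h$ and $0$ otherwise. Three facts make $S_hy$ a solution of \eqref{eq:fp-output} with $g$ replaced by $S_hg$: $\tilde{\mathbb F}_\infty$ is causal, it is time-invariant, and $\phi(0)=0$ (take $s=0$ in \eqref{eq:key-inequality}). You then compare two solutions of the same input--output fixed-point equation with different forcings.

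What you need is $L^2(0,T;U)$-Lipschitz dependence of $y$ on an \emph{arbitrary} forcing $g$, since $S_hg$ is in general not a free output. The paper gets this from strict output passivity of $\tilde{\mathbb F}_\infty$ via a maximal-monotonicity argument. It also follows from \eqref{eq:incremental} with $z_{01}=z_{02}=0$: write $y=g+\tilde y$ and let $g$ enter the original loop through $d$ and $r$. With $L$ the Lipschitz constant, $\|S_hy-y\|_{L^2(0,T;U)}\le L\|S_hg-g\|_{L^2(0,T;U)}$. Since
\[
\|S_hf-f\|_{L^2(0,T;U)}^2=\int_0^h\|f(t)\|_U^2\,\diff t+\int_h^T\|f(t-h)-f(t)\|_U^2\,\diff t ,
\]
integrating against $h^{-1-2\eta}\diff h$ gives the following. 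A multiple of $[g]^2_{H^\eta(0,T)}+\int_0^T s^{-2\eta}\|g(s)\|_U^2\,\diff s$ bounds $[y]^2_{H^\eta(0,T)}$, and also the Hardy term of $y$ you could not reach. The singular weight now falls on the \emph{linear} free output. There the fractional Hardy inequality for $\eta<\tfrac12$ controls it by $\|g\|^2_{H^\eta(0,T;U)}$, and your first paragraph supplies $g\in H^\eta$.
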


\begin{proof}
Let $T>0$ and define the relation (i.e., an operator which does not need to be defined everywhere)

\[
\mathcal R:=\lbrace r\in \mathfrak U\mid \exists u\in\mathfrak U,\: \tilde{\mathbb F}_\infty r = u\rbrace.
\]
\begin{comment}
\[
\mathcal R:= \tilde{\mathbb F}_\infty^{-1}.
\]
Thus
\[
r\in \mathcal Ru
\quad\Longleftrightarrow\quad
\tilde{\mathbb F}_\infty r=u.
\]
\end{comment}
Because $\tilde{\mathbb F}_\infty$ is strictly output passive, this relation is maximal monotone. In the \textbf{first step}, we prove the result in $L^2([0,T];U)$ (therefore, for $H^\eta$ with $\eta=0$). Since
$\tilde{\mathbb F}_\infty$ is strictly output passive, for all
$r_1\in \mathcal Ru_1$ and $r_2\in \mathcal Ru_2$ one has
\[
\langle r_1-r_2,u_1-u_2\rangle_{L^2}
\ge
\mu\|u_1-u_2\|_{L^2}^2 .
\]
Moreover, since $\phi$ is satifies \eqref{eq:strict-output}, we have

\[
\langle r_1-r_2+\phi(u_1)-\phi(u_2),u_1-u_2\rangle_{L^2}
\ge
\mu\|u_1-u_2\|_{L^2}^2 .
\]
Assuming that $\mathcal R$ is maximal monotone, Rockafellar's theorem implies that
$\mathcal R+\phi$ is maximal monotone. Since it is strongly monotone, its inverse
is single-valued and Lipschitz:
\[
\|(\mathcal R+\phi)^{-1}g_1-(\mathcal R+\phi)^{-1}g_2\|_{L^2}
\le
\frac1\mu \|g_1-g_2\|_{L^2}.
\]

In the \textbf{second step}, we prove the fractional regularity estimate. Let
\[
r+\phi(u)=g,\qquad r\in \mathcal Ru,
\]
with $g=\Psi_\infty z_0$. For $h>0$, denote by $S_h$ the right-shift operator,
\[
(S_h f)(t)=
\begin{cases}
0, & 0<t<h,\\
f(t-h), & t\ge h.
\end{cases}
\]
Since the graph of $\mathcal R=\tilde{\mathbb F}_\infty^{-1}$ is shift-invariant,
\[
r\in \mathcal Ru
\quad\Longrightarrow\quad
S_h r\in \mathcal R(S_hu).
\]
Assuming $\phi(0)=0$, the shifted equation reads
\[
S_h r+\phi(S_hu)=S_hg.
\]
Applying the $L^2$ Lipschitz estimate for $(\mathcal R+\phi)^{-1}$ to the two
right-hand sides $S_hg$ and $g$, we obtain
\[
\|S_hu-u\|_{L^2([0,T];U)}
\le
\frac1\mu
\|S_hg-g\|_{L^2([0,T];U)} .
\]
Squaring, multiplying by $h^{-1-2\eta}$ and integrating in $h$ gives
\[
\int_0^T
\frac{\|S_hu-u\|_{L^2(0,T;U)}^2}{h^{1+2\eta}}\,\diff h
\le
\frac1{\mu^2}
\int_0^T
\frac{\|S_hg-g\|_{L^2(0,T;U)}^2}{h^{1+2\eta}}\,\diff h .
\]
For $0<\eta<1/2$, the right-hand side is bounded by the
$H^\eta([0,T];U)$ norm of $g$, because the zero-extension characterization of
$H^\eta$ is equivalent to the usual Sobolev--Slobodeckii norm. Hence
\[
[u]_{H^\eta([0,T];U)}
\le
\frac{1}{\mu}
[g]_{H^\eta([0,T];U)}.
\]
Together with the $L^2$ estimate, this yields
\[
\|u\|_{H^\eta([0,T];U)}
\le
\max\left(1,\frac{1}{\mu}\right)
\|g\|_{H^\eta([0,T];U)}.
\]
Recalling that $g=\Psi_\infty z_0$ concludes the proof.
\end{proof}

Now, we state and prove that the operator $\sigma$ is bounded from $H^\eta$ to $H^\eta$.

\begin{lemma}
\label{lem:bound-sigma-eta}
For any signals $y\in H^\eta_{\mathrm{loc}}([0,\infty);U)$, we have
$$
\Vert \sigma(y)\Vert_{H^\eta} \leq \Vert y\Vert_{H^{\eta}}.
$$
\end{lemma}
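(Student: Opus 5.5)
The plan is to treat the two pieces of the Sobolev--Slobodeckii norm $\Vert \sigma(y)\Vert_{H^\eta}=\Vert\sigma(y)\Vert_{L^2}+[\sigma(y)]_{H^\eta}$ one at a time. The only property of $\sigma$ needed is that it is a pointwise $1$-Lipschitz map on $U$ with $\sigma(0)=0$. Both facts follow from item (i) of Definition~\ref{def:sat}, applied pointwise in time.

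First, from the strict output passivity \eqref{eq:strict-output} and Cauchy--Schwarz,
\begin{equation*}
\Vert \sigma(y_1)-\sigma(y_2)\Vert_U^2\leq \langle \sigma(y_1)-\sigma(y_2),y_1-y_2\rangle_U\leq \Vert \sigma(y_1)-\sigma(y_2)\Vert_U\Vert y_1-y_2\Vert_U .
\end{equation*}
This gives $\Vert\sigma(y_1)-\sigma(y_2)\Vert_U\leq\Vert y_1-y_2\Vert_U$ for all $y_1,y_2\in U$, so $\sigma$ is nonexpansive (firmly nonexpansive, in fact).

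Second, I need $\sigma(0)=0$. For the saturations considered this holds by construction. If one wants it only from the axioms, take $y=0$ in the small-amplitude branch of item (ii); this gives no information by itself. So I would instead use item (v) with $s=0$, which yields $\Vert\sigma(0)\Vert_{\mathcal S}\leq\langle\sigma(0),0\rangle_U=0$, hence $\sigma(0)=0$ since $\mathcal S=U$. Nonexpansiveness then gives $\Vert\sigma(y(t))\Vert_U\leq\Vert y(t)\Vert_U$ for almost every $t$. Integrating over $[0,T]$ (or $[0,\infty)$) yields $\Vert\sigma(y)\Vert_{L^2}\leq\Vert y\Vert_{L^2}$. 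Measurability of $t\mapsto\sigma(y(t))$ is automatic because $\sigma$ is continuous and $y$ is strongly measurable.

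Third, for the Gagliardo seminorm \eqref{def:semi-norm}, I apply the Lipschitz bound inside the double integral:
\begin{equation*}
[\sigma(y)]^2_{H^\eta}=\int\!\!\int\frac{\Vert\sigma(y(t))-\sigma(y(s))\Vert_U^2}{|t-s|^{1+2\eta}}\diff t\diff s\leq\int\!\!\int\frac{\Vert y(t)-y(s)\Vert_U^2}{|t-s|^{1+2\eta}}\diff t\diff s=[y]^2_{H^\eta}.
\end{equation*}
Adding this to the $L^2$ bound gives $\Vert\sigma(y)\Vert_{H^\eta}\leq\Vert y\Vert_{H^\eta}$. The same argument works on every finite interval $[0,T]$, which covers the local spaces $H^\eta_{\mathrm{loc}}$.

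The only delicate point is that the inequality holds with constant one, and this depends on the norm chosen. It holds for the Sobolev--Slobodeckii norm defined in the paper, because both of its pieces are monotone under pointwise nonexpansive maps. If instead one used the interpolation norm from \cite[Appendix A]{paunonen2024admissibility}, the nonlinear map $\sigma$ could not be handled by interpolation directly. In that case I would argue with the Slobodeckii norm and then pay the equivalence constant, so the lemma would hold only up to a constant. I therefore commit to working with the explicit norm \eqref{def:semi-norm}, where no obstacle arises.
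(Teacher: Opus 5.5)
Your proof is correct and follows the paper's argument: the $1$-Lipschitz bound from item (i) of Definition~\ref{def:sat} is applied pointwise inside the Gagliardo double integral. You also justify the $L^2$ part of the norm via $\sigma(0)=0$, obtained from item (v) with $s=0$. The paper leaves this step implicit when it says ``by definition of the $H^\eta$-norm''.
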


\begin{proof}
Let $T>0$ be arbitrary. Recall that $\sigma$, as an incrementally strictly output passive operator, is globally Lipschitz with Lipschitz constant equal to $1$. It is moreover a local operator. Therefore, we have
\begin{equation}
[\sigma(y)]_{H^\eta}=\int_0^{T} \int_0^T \frac{\Vert \sigma(y(t))-\sigma(y(s))\Vert^2_U}{|t-s|^{1+2\eta}} \diff t \diff s \leq \int_0^T \int_0^T \frac{\Vert y(t)-y(s)\Vert_U^2}{|t-s|^{1+2\eta}} \diff t \diff s,
\end{equation}
Therefore, by definition of the $H^\eta$-norm, we obtain the desired result.
\end{proof}

\begin{theorem}
\label{thm:SUGAS}
Suppose that $\mathcal S=U$ and that Assumptions \ref{ass:impedance}, \ref{ass:regular} and \ref{ass:stab} are satisfied. Therefore, the origin of \eqref{eq:cl-sat} with $u=d=0$ is SUGAS with $H_s:=H_\eta$. 
\end{theorem}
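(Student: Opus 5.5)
We will mimic the proof of Theorem~\ref{thm:SGES}, replacing the $L^\infty$-bound on the output (unavailable without the coercivity \eqref{eq:coercivity-tf}) by an $L^p$-bound with some $p>2$ obtained from fractional regularity. Fix $\eta\in(0,\tfrac12)$ and $z_0\in H_\eta$.

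\textbf{Step 1 (regularity of the output).} By Lemma~\ref{lem:conserved-regularity}, $\tilde\Psi_\infty z_0\in H^\eta([0,T];U)$ with $\|\tilde\Psi_\infty z_0\|_{H^\eta}\le K\|z_0\|_{H_\eta}$. We want $K$ independent of $T$. Exponential stability of $\mathbb S$ and infinite-time admissibility of $C_\kappa$ give this on $L^2$ and $H^1_0$ (via $C_\kappa\mathbb S_tA_\kappa z_0$), so interpolation gives it for the whole half-line. Lemma~\ref{lem:l-nl} then shows that the nonlinear output $y$, solving \eqref{eq:fp-output}, satisfies
$$\|y\|_{H^\eta([0,T];U)}\le \max(1,\mu^{-1})K\|z_0\|_{H_\eta}=:R(\|z_0\|_{H_\eta}).$$

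\textbf{Step 2 (integrability).} Apply the vector-valued fractional Sobolev embedding $H^\eta([0,T];U)\hookrightarrow L^p([0,T];U)$ with $p=\frac{2}{1-2\eta}>2$. Its constant must be uniform in $T$; we can arrange this by extending $y$ to the half-line or working on the whole half-line from the start. This yields $\|\mathbf P_T y\|_p\le c_\eta R$. Proposition~\ref{prop:jauge} with $\theta=\frac{p-2}{2(p-1)}$ then gives
$$\int_0^T\mathcal J(\|y\|_U)\,dt\ \ge\ \min\Big(\|\mathbf P_Ty\|_2^2,\ \tfrac12 (c_\eta R)^{-\frac{1-\theta}{\theta}}\|\mathbf P_Ty\|_2^{1/\theta}\Big).$$
Since $1/\theta>2$, the second term is not quadratic. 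However, $\|\mathbf P_Ty\|_2\le c\|z_0\|_H\le cR$ by well-posedness, so we can write $\|\mathbf P_Ty\|_2^{1/\theta}\ge (cR)^{1/\theta-2}\|\mathbf P_Ty\|_2^2$. This gives
$$\int_0^T\mathcal J\ge C_R\|\mathbf P_Ty\|_2^2,$$
with $C_R>0$ decreasing in $R$.

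\textbf{Step 3 (observability transfer and iteration).} As in the second step of Theorem~\ref{thm:SGES}, the identity $\tilde\Psi_\infty z_0=y+\tilde{\mathbb F}_\infty\phi(\kappa y)$, the boundedness of $\tilde{\mathbb F}_\infty$ and the Lipschitz continuity of $\phi$ give \eqref{eq:observ-nl}. Combined with the dissipation inequality and \eqref{eq:jauge}, this yields
$$\|z(T)\|_H^2\le(1-\underline c_\sigma k_C^2C_R)\|z_0\|_H^2.$$
To iterate, we restart at $z(nT)$. This requires $\|z(nT)\|_{H_\eta}$ to stay bounded by a function of $\|z_0\|_{H_\eta}$, which we do not yet have. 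Instead, we avoid restarting: we apply Step 2 once on the whole half-line, using the uniform-in-$T$ bound $\|y\|_{L^p(0,\infty)}\le c_\eta R$, and then apply the observability inequality on each window $[nT,(n+1)T]$. The local estimate on each window needs $z(nT)$ only in $H$, together with the output $L^p$ bound restricted to that window, which is inherited from the global one. Proposition~\ref{prop:jauge} applies verbatim on the shifted interval. This gives
$$E_{n+1}\le(1-\underline c_\sigma k_C^2C_{R})E_n,$$
hence exponential decay at a rate depending on $\|z_0\|_{H_\eta}$. Taking $\rho(s,t)$ to be the corresponding exponential envelope, made $\mathcal{KL}$ by monotonicity in $s$, yields SUGAS with $H_s=H_\eta$ (in fact semi-global exponential stability).

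The main obstacle will be Steps 1 and 2: getting $H^\eta$ and embedding constants that do not depend on $T$. The constant in Lemma~\ref{lem:l-nl} is uniform, but the Sobolev embedding on bounded intervals $[0,T]$ degenerates as $T$ shrinks, not as it grows. Working on $[0,\infty)$ with the zero-extension characterization, which is valid for $\eta<1/2$, should resolve this.
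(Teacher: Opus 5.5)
\textbf{Gap in Step 2.} There is a genuine gap in Step 2, and it invalidates the contraction you claim in Step 3. Set $x:=\|\mathbf P_Ty\|_2$. Since $1/\theta>2$, the bound $x\le cR$ gives $x^{1/\theta}=x^{1/\theta-2}x^2\le (cR)^{1/\theta-2}x^2$. That is an \emph{upper} bound on $x^{1/\theta}$ in terms of $x^2$, not the lower bound you wrote. A lower bound $x^{1/\theta}\ge c\,x^2$ would require $x$ to be bounded away from zero. The only available lower bound is $x\ge k\|z_0\|_H$ (or $x\ge k\sqrt{E_n}$ on the $n$-th window), and it degenerates exactly as the trajectory decays. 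So $\int_0^T\J\ge C_R\|\mathbf P_Ty\|_2^2$ with $C_R$ independent of the data is false for $p<\infty$. The quadratic bound is precisely what the $L^\infty$-estimate of Theorem~\ref{thm:hastir} provides ($p=\infty$, $\theta=1/2$), and it is lost here.

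\textbf{What your argument actually yields.} Using $E_n\le E_0\lesssim R^2$, Proposition~\ref{prop:jauge} and \eqref{eq:observ-nl} give on each window
\[
E_{n+1}\le E_n-m_*\min\bigl(E_n,E_n^{\frac{1}{2\theta}}\bigr)\le E_n-m_*E_n^{\frac{1}{2\theta}},\qquad \frac{1}{2\theta}=\frac{p-1}{p-2}>1 .
\]
This recursion is not a geometric contraction. The missing step is the paper's third step. With $\nu:=\frac{1}{2\theta}-1$, convexity of $s\mapsto s^{-\nu}$ gives $E_{n+1}^{-\nu}-E_n^{-\nu}\ge \nu m_*$. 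Hence $E_n\le (E_0^{-\nu}+\nu m_* n)^{-1/\nu}$ and $\|z(t)\|_H\le M_*(1+t)^{-1/(2\nu)}$. This polynomial $\mathcal{KL}$ bound on $H_\eta$-bounded sets is what yields SUGAS. Your claim of semi-global exponential stability is not supported by these ingredients.

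\textbf{What survives.} Once the recursion is handled this way, the rest of your plan is sound, and it takes a different route from the paper. You apply Proposition~\ref{prop:jauge} directly to the nonlinear output $y$. For this you use the $H^\eta$-bound of Lemma~\ref{lem:l-nl} and one global $L^p(0,\infty)$ bound, restricted window by window, together with \eqref{eq:observ-nl} for $z(nT)\in H$. The paper instead applies Proposition~\ref{prop:jauge} to the free output $\tilde\Psi_\infty z_0$. It then compares $\mathcal A_T$ with $\mathcal D_T$ through the subadditivity of $\J$ and an $L^1$--$L^q$ interpolation of $\phi(y)$. Your windowing avoids the paper's unproved assertion that $H_\eta$-bounds stay uniform along $z(nT)$. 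It also gives the exponent $m=\frac{1}{2\theta}$ rather than $\frac{1}{4\theta_1\theta_2}$, so a faster polynomial rate.
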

\begin{proof}
Let \(T>0\) and \(R>0\) be fixed. Throughout the proof, \(M_*>0\) denotes a generic positive constant whose value may change from line to line and which depends only on \(R\), \(T\), the parameters of the saturation map, and the admissibility, regularity and observability constants of the associated linear system. Similarly, \(m_*>0\) denotes a generic positive constant with the same type of dependence. In particular, these constants are independent of the iteration index introduced below.

In the \textbf{first step}, we describe precisely the nonlinear dissipation on one time interval. Let \(z_0\in H_\eta\) be such that
\[
        \|z_0\|_{H_\eta}\le R.
\]
By Lemma~\ref{lem:conserved-regularity},
\[
        \|\mathbf P_T\Psi_\infty z_0\|_{H^\eta(0,T;U)}
        \le M_* .
\]
Hence, for every
\[
        2<p<\frac{2}{1-2\eta},
\]
the Sobolev embedding \(H^\eta(0,T;U)\hookrightarrow L^p(0,T;U)\) gives
\begin{equation}
\label{eq:free-output-Lp-clean}
        \|\mathbf P_T\Psi_\infty z_0\|_{L^p(0,T;U)}
        \le M_* .
\end{equation}

We introduce
\[
        \mathcal A_T(z_0)
        :=
        \int_0^T
        \J\bigl(\|\mathbf P_T\Psi_\infty z_0(t)\|_U\bigr)\,\diff t,
\]
and
\[
        \mathcal D_T(z_0)
        :=
        \int_0^T
        \J\bigl(\|y(t)\|_U\bigr)\,\diff t,
\]
where \(y\) denotes the nonlinear closed-loop output on \([0,T]\) with the initial state \(z_0\).

The input-output relation on \([0,T]\) is
\begin{equation}
\label{eq:io-relation-clean}
        \mathbf P_T\Psi_\infty z_0
        =
        y+\widetilde{\mathbb F}_T\phi(y).
\end{equation}
By the subadditivity estimate for \(\J\), there exists \(M_*>0\) such that
\begin{equation}
\label{eq:J-subadditive-clean}
        \mathcal A_T(z_0)
        \le
        M_*\mathcal D_T(z_0)
        +
        M_*
        \int_0^T
        \J\bigl(\|\widetilde{\mathbb F}_T\phi(y)(t)\|_U\bigr)\,\diff t .
\end{equation}
Since \(\J(r)\le r^2\) and \(\widetilde{\mathbb F}_T\) is bounded on \(L^2(0,T;U)\),
\begin{equation}
\label{eq:Fphi-L2-clean}
        \int_0^T
        \J\bigl(\|\widetilde{\mathbb F}_T\phi(y)(t)\|_U\bigr)\,\diff t
        \le
        M_*\|\phi(y)\|_{L^2(0,T;U)}^2 .
\end{equation}

Let
\[
        2<q<\frac{2}{1-2\eta},
        \qquad
        \theta_1:=\frac{q-2}{2(q-1)}\in(0,1/2).
\]
Interpolating between \(L^1(0,T;U)\) and \(L^q(0,T;U)\), we get
\[
        \|\phi(y)\|_{L^2(0,T;U)}
        \le
        \|\phi(y)\|_{L^1(0,T;U)}^{\theta_1}
        \|\phi(y)\|_{L^q(0,T;U)}^{1-\theta_1}.
\]
By the Sobolev embedding and Lemma~\ref{lem:bound-sigma-eta},
\[
        \|\phi(y)\|_{L^q(0,T;U)}
        \le
        M_*.
\]
Moreover, by \eqref{eq:key-inequality} and \eqref{eq:jauge},
\[
        \|\phi(y)\|_{L^1(0,T;U)}
        \le
        \int_0^T \langle \sigma(y(t)),y(t)\rangle_U\,\diff t
        \le
        M_*\mathcal D_T(z_0).
\]
Therefore,
\[
        \|\phi(y)\|_{L^2(0,T;U)}^2
        \le
        M_*\mathcal D_T(z_0)^{2\theta_1}.
\]
Together with \eqref{eq:Fphi-L2-clean}, this gives
\begin{equation}
\label{eq:Fphi-DT-clean}
        \int_0^T
        \J\bigl(\|\widetilde{\mathbb F}_T\phi(y)(t)\|_U\bigr)\,\diff t
        \le
        M_*\mathcal D_T(z_0)^{2\theta_1}.
\end{equation}
Thus, from \eqref{eq:J-subadditive-clean},
\begin{equation}
\label{eq:AT-DT-clean}
        \mathcal A_T(z_0)
        \le
        M_*
        \left(
        \mathcal D_T(z_0)
        +
        \mathcal D_T(z_0)^{2\theta_1}
        \right).
\end{equation}

By passivity and \eqref{eq:jauge}, the nonlinear dissipation is bounded on bounded subsets of \(H_\eta\):
\[
        \mathcal D_T(z_0)
        \le
        M_*\|z_0\|_H^2
        \le
        M_*.
\]
Since \(0<2\theta_1<1\), we infer from \eqref{eq:AT-DT-clean} that
\begin{equation}
\label{eq:AT-DT-absorbed-clean}
        \mathcal A_T(z_0)
        \le
        M_*\mathcal D_T(z_0)^{2\theta_1}.
\end{equation}
Consequently,
\begin{equation}
\label{eq:DT-AT-clean}
        \mathcal D_T(z_0)
        \ge
        m_*
        \mathcal A_T(z_0)^{\frac1{2\theta_1}}.
\end{equation}

It remains to estimate \(\mathcal A_T(z_0)\) from below. Applying Proposition~\ref{prop:jauge} to the signal \(\mathbf P_T\Psi_\infty z_0\), using \eqref{eq:free-output-Lp-clean} and exact observability, we obtain
\[
        \mathcal A_T(z_0)
        \ge
        m_*
        \|z_0\|_H^{\frac1{\theta_2}},
        \qquad
        \theta_2:=\frac{p-2}{2(p-1)}.
\]
Combining this inequality with \eqref{eq:DT-AT-clean}, we obtain
\begin{equation}
\label{eq:DT-z0-clean}
        \mathcal D_T(z_0)
        \ge
        m_*
        \|z_0\|_H^{\frac1{2\theta_1\theta_2}}.
\end{equation}
Equivalently, if \(E_0:=\|z_0\|_H^2\), then
\begin{equation}
\label{eq:DT-E0-clean}
        \mathcal D_T(z_0)
        \ge
        m_* E_0^m,
        \qquad
        m:=\frac1{4\theta_1\theta_2}>1.
\end{equation}

In a \textbf{second step}, we derive some discrete energy estimate. For \(n\in\mathbb N\), set
\[
        E_n:=\|z(nT)\|_H^2.
\]
By dissipativity, \(E_{n+1}\le E_n\). Moreover, the \(H_\eta\)-regularity is propagated along the trajectory by Lemma~\ref{lem:conserved-regularity}; hence the constants \(M_*,m_*\) above can be chosen uniformly with respect to \(n\), provided \(\|z_0\|_{H_\eta}\le R\).

Applying Step~1 on the interval \([nT,(n+1)T]\), with initial state \(z(nT)\), yields
\[
        \int_{nT}^{(n+1)T}
        \J\bigl(\|y(t)\|_U\bigr)\,\diff t
        \ge
        m_* E_n^m.
\]
On the other hand, the energy inequality gives
\[
        E_{n+1}-E_n
        \le
        -2\underline c_\sigma
        \int_{nT}^{(n+1)T}
        \J\bigl(\|y(t)\|_U\bigr)\,\diff t.
\]
Therefore, after possibly modifying \(m_*>0\),
\begin{equation}
\label{eq:discrete-energy-clean}
        E_{n+1}-E_n
        \le
        -m_*E_n^m,
        \qquad n\in\mathbb N.
\end{equation}
Since
\[
        \theta_1=\frac{q-2}{2(q-1)},
        \qquad
        \theta_2=\frac{p-2}{2(p-1)},
\]
we have
\[
        m
        =
        \frac1{4\theta_1\theta_2}
        =
        \frac{(q-1)(p-1)}{(q-2)(p-2)}
        >1.
\]
Set
\[
        \nu:=m-1>0.
\]

In a \textbf{third step}, we prove the polynomial stability result. The discrete estimate \eqref{eq:discrete-energy-clean} reads
\[
        E_{n+1}-E_n
        \le
        -m_*E_n^{1+\nu}.
\]
Since \(E_{n+1}\le E_n\), the convexity of \(s\mapsto s^{-\nu}\) on \((0,\infty)\) gives
\[
        E_{n+1}^{-\nu}-E_n^{-\nu}
        \ge
        \nu E_n^{-\nu-1}(E_n-E_{n+1}).
\]
Thus
\[
        E_{n+1}^{-\nu}-E_n^{-\nu}
        \ge
        \nu m_*.
\]
Summing from \(0\) to \(n-1\), we obtain
\[
        E_n^{-\nu}
        \ge
        E_0^{-\nu}+\nu m_* n.
\]
Hence
\[
        E_n
        \le
        \left(E_0^{-\nu}+\nu m_*n\right)^{-1/\nu}
        \le
        M_*(1+n)^{-1/\nu}.
\]
Finally, if \(t\in[nT,(n+1)T]\), dissipativity gives
\[
        \|z(t)\|_H^2\le E_n.
\]
Since \(n\ge t/T-1\), we infer, after modifying \(M_*\), that
\[
        \|z(t)\|_H^2
        \le
        M_*(1+t)^{-1/\nu},
        \qquad t\ge0.
\]
Equivalently,
\[
        \|z(t)\|_H
        \le
        M_*(1+t)^{-\frac1{2\nu}},
        \qquad t\ge0.
\]
This proves the semi-uniform polynomial decay estimate on bounded subsets of \(H_\eta\).
\end{proof}

Now we state our robust result, namely, the fact that the system \eqref{eq:cl-sat} is semi-uniformly ISS.

\begin{theorem}
\label{thm:ISS}
Suppose that the properties assumed in Theorem \ref{thm:SUGAS} are satisfied. Suppose that $d\in \mathfrak U$ is uniformly bounded in $\mathfrak U$, i.e., $\Vert d(t)\Vert_{U}\leq K_d$ for almost every $t\geq 0$ and with $K_d>0$. Then, the trajectories of \eqref{eq:cl-sat} are semi-uniformly ISS.
\end{theorem}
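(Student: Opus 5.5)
Our plan is to repeat the three-step scheme of the proof of Theorem~\ref{thm:SUGAS}, now carrying the disturbances $d$ and $r$ along as perturbation terms. We then close the argument with a discrete comparison lemma of ISS type in place of the pure decay recursion $E_{n+1}-E_n\le -m_*E_n^{m}$.

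\textbf{Energy inequality.} Fix $R>0$, $T>0$ and $z_0\in H_\eta$ with $\|z_0\|_{H_\eta}\le R$. Using Theorem~\ref{thm:mild} with $(z_{02},d_2,r_2)=(0,0,0)$, together with the inequalities \eqref{eq:jauge} and \eqref{eq:ineq-sat-ISS} (the latter absorbs the shift $e=y-r$ into an $L^1$ term in $r$), we aim at the one-step inequality
\[
E_{n+1}-E_n\le -2\underline c_\sigma\int_{nT}^{(n+1)T}\J(\|y(t)\|_U)\,\diff t+2\int_{nT}^{(n+1)T}\langle d(t),y(t)\rangle_U\,\diff t+2K_\sigma\kappa\int_{nT}^{(n+1)T}\|r(t)\|_U\,\diff t .
\]
The $d$-term is split on the sets $\{\|y\|_U\le\delta\}$ and $\{\|y\|_U>\delta\}$. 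On the first set we use Young's inequality and absorb half of $\J$. On the second set we only have $\|d\|_U\|y\|_U\le K_d\J(\|y\|_U)$, which cannot be absorbed for large $K_d$. We therefore plan to treat $d$ as part of the input $v$ in \eqref{eq:cl-sat}, with $v=\kappa e-\sigma(\kappa e)+d$ as in \eqref{eq:expression-input}, and to use the bound $K_d$ only through the $L^\infty$ and $L^2(0,T)$ size of $d$ on each window. This explains why $\rho$ and $\ell$ depend on $K_d$.

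\textbf{Regularity and observability with inputs.} We redo Step~1 of the proof of Theorem~\ref{thm:SUGAS} with the input--output relation
\[
\mathbf P_T\Psi_\infty z_0=y+\widetilde{\mathbb F}_T\phi(y)-\widetilde{\mathbb F}_T(d-\kappa r\text{-terms}).
\]
The added terms are controlled in $L^2(0,T)$ by $M_*(K_d^2T+\|\mathbf P_Tr\|_{L^2})$ through boundedness of $\widetilde{\mathbb F}_T$. Uniform $H^\eta$ bounds on the state along the trajectory come from Lemma~\ref{lem:conserved-regularity} and Lemma~\ref{lem:l-nl}. This requires $d$ to be $H^\eta$ on windows, or else we first treat $d$ only through $L^2$ bounds and keep the $L^p$ control coming from $\Psi_\infty z(nT)$. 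The outcome we aim for is
\[
E_{n+1}-E_n\le -m_*E_n^{m}+M_*\bigl(\|r\|_{L^1(nT,(n+1)T)}+\epsilon_d\bigr),
\]
together with a uniform bound $\sup_n\|z(nT)\|_{H_\eta}\le M_*(R,K_d)$ that keeps the constants $m_*$ and $M_*$ independent of $n$.

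\textbf{Conclusion and main obstacle.} From the recursion we derive the ISS estimate by a standard discrete comparison argument. Whenever $m_*E_n^m\ge 2\times$(perturbation), the sequence decays as in Step~3 of the proof of Theorem~\ref{thm:SUGAS}. Otherwise $E_n\le \ell(\|\mathbf P_tr\|_{\mathfrak R})$, and a max-type estimate gives
\[
\|z(t)\|_H\le \rho(\|z_0\|_{H_\eta},t)+\ell(\|\mathbf P_tr\|_{\mathfrak R}),
\]
with $\rho(s,t)\sim M_*(s)(1+t)^{-1/(2\nu)}$. The main obstacle is the term $\epsilon_d$. A merely bounded $d$ does not vanish, so a genuine ISS bound that depends on $d$ only through $K_d$ requires either a smallness relation, or treating $d$ as entering through $\sigma$ so that the level $\delta$ absorbs it. We would first try to show that for $\|y\|_U$ large the saturation dominates, namely $\langle\sigma(\kappa y),y\rangle_U-\langle d,y\rangle_U\ge(\gamma-K_d)\|y\|_U$, assuming $K_d<\gamma$ as implicitly required. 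The same regime split then closes the $d$-contribution without leaving a residual term.
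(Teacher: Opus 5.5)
Your proposal has a genuine gap, located exactly where you place the main obstacle, and it cannot be closed. Your final claim is that, under $K_d<\gamma$, the regime split absorbs $\langle d,y\rangle_U$ ``without leaving a residual term''. Note first that $K_d<\gamma$ is an extra hypothesis, not part of the statement. Even granting it, the claim only holds on the large-output set. Where $\kappa\|y\|_U\le\delta$, the dissipation $\langle\sigma(\kappa y),y\rangle_U$ is only quadratic in $\|y\|_U$, while $\langle d,y\rangle_U$ is linear, so the disturbance dominates for small outputs. Young's inequality then leaves a residual of order $\int_{nT}^{(n+1)T}\|d(t)\|_U^2\,\diff t\sim K_d^2T$ on every window. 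This is precisely your $\epsilon_d$, and it neither sums nor decays.

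No refinement can remove it. For $z_0=0$ and $r=0$, the estimate you aim at (that of Definition~\ref{def:stability-concepts}(2)) forces $z\equiv0$, because $\rho$ vanishes at $0$ in its size variable and $\ell(0)=0$. Now take $H=U=Y=\mathbb R$, $A=0$, $B=C=1$, $D=0$ with the usual saturation. This satisfies Assumptions~\ref{ass:impedance}--\ref{ass:stab} with $\mathcal S=U$. For $d\equiv c$ with $0<c<1$ (so $K_d=c<\gamma=1$, and $c$ may be taken as small as you like), the closed loop and its solution are
\[
\dot z=-\sigma(\kappa z)+c,\quad z(0)=0,\qquad z(t)=\tfrac{c}{\kappa}\bigl(1-e^{-\kappa t}\bigr)\neq0\quad(t>0).
\]
So your recursion can at best give a bound with an additional $K_d$-dependent term (an ISS gain in $\|d\|_{\infty}$, or practical stability), not the stated estimate.

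Independently, your regularity step is unsupported. Lemma~\ref{lem:l-nl} treats only the fixed point \eqref{eq:fp-output} with $d=r=0$, and Lemma~\ref{lem:conserved-regularity} needs inputs in $H^\eta$. A merely bounded $d$, and an $r$ controlled only in $L^1$, carry no fractional regularity. Hence neither the per-window $L^p$ bound on $y$ nor $\sup_n\|z(nT)\|_{H_\eta}\le M_*(R,K_d)$ is available, and you need the latter to keep $m_*$ independent of $n$.

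The paper avoids this by a different route. Instead of rerunning the three steps with inputs, it compares $z$ with the undisturbed trajectory $\bar z$ from the same $z_0$ and bounds $\bar z$ by Theorem~\ref{thm:SUGAS}. It then estimates $\delta z=z-\bar z$, which vanishes at $t=0$, through the incremental inequality \eqref{eq:incremental}. In that route $r$ contributes only an $L^1$ term and no regularity of the perturbed trajectory is ever needed, so it is the more economical way to treat $r$.

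It does not, however, supply the missing idea for $d$. The paper absorbs $2\int_0^t\langle d,\delta y\rangle_U\,\diff s$ by imposing $K_d<\mu/2$ and bounding it by $K_d\|\delta y\|^2_{L^2(0,t;U)}$. That bound does not follow from $\|d(t)\|_U\le K_d$. One only gets $2K_d\|\delta y\|_{L^1(0,t;U)}$, or $\frac{2}{\mu}\|\mathbf P_t d\|^2_{L^2}+\frac{\mu}{2}\|\delta y\|^2_{L^2}$, which grows linearly in $t$ for non-decaying $d$. By the example above no such absorption is possible. The obstruction you identified is real, and the statement itself needs a $d$-dependent term on its right-hand side.
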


%\textcolor{red}{Réécrire}

\begin{proof}
Let $R>0$ and let $z_0\in H_\eta$ be such that
$\|z_0\|_{H_\eta}\le R$. We denote by $(z,y,e)$ the solution of \eqref{eq:cl-sat}
associated with the initial condition $z_0$ and the inputs $(d,r)$.
We also denote by $(\bar z,\bar y,\bar e)$ the solution associated with
the same initial condition $z_0$ and with $d=r=0$.

Set
\[
\delta z:=z-\bar z,\qquad
\delta y:=y-\bar y.
\]
It is worth noting that this is not a superposition principle for the nonlinear system; it is
only the difference between two trajectories.

By Theorem \ref{thm:SUGAS}, there exists
$\rho\in\mathcal{KL}$ such that
\[
\|\bar z(t)\|_H
\le
\rho(\|z_0\|_{H_\eta},t),
\qquad t\ge0.
\]

We now estimate $\delta z$. Since the two trajectories have the same
initial condition, the initial difference is zero. Using the incremental
dissipation estimate, we obtain
\[
\begin{aligned}
\|\delta z(t)\|_H^2
\leq&
2\int_0^t \langle d(s),\delta y(s)\rangle_U\,\diff s
-\mu\|\delta y\|^2_{L^2(0,t;U)}
\\
&+
2\int_0^t
\left\langle
\delta y(s),
\sigma(y(s)+r(s))-\sigma(y(s))
\right\rangle_U\,\diff s .
\end{aligned}
\]
The first term is treated thanks to the uniform bound on $d$:
\[
2\int_0^t \langle d(s),\delta y(s)\rangle_U\,ds
-\frac{\mu}{2}\|\delta y\|^2_{L^2(0,t;U)}
\le \left(K_d-\frac{\mu}{2} \right) \Vert \delta y\Vert^2_{L^2([0,t];U)}.
\]
We select $K_d<\frac{\mu}{2}$ so that this term becomes negligible.

For the term containing $r$, we use the structural saturation estimate
\eqref{eq:ineq-sat-ISS}. More precisely,
\[
\left\langle
\delta y,
\sigma(y+r)-\sigma(y)
\right\rangle_U
\le
K_\sigma\|r\|_{\mathcal S},
\]
and therefore
\[
2\int_0^t
\left\langle
\delta y(s),
\sigma(y(s)+r(s))-\sigma(y(s))
\right\rangle_U\,\diff s
\le
2K_\sigma\|\mathbf P_t r\|_{\mathfrak R}.
\]

Consequently,
\[
\|\delta z(t)\|_H^2
\le
2K_\sigma\|\mathbf P_t r\|_{\mathfrak R}.
\]
Thus
\[
\|\delta z(t)\|_H
\le \sqrt{2K_\sigma}\|\mathbf P_t r\|_{\mathfrak R}^{1/2}.
\]

Since
\[
z(t)=\bar z(t)+\delta z(t),
\]
we get
\[
\|z(t)\|_H
\le
\rho(\|z_0\|_{H_\eta},t)
+\sqrt{2K_\sigma}\|\mathbf P_t r\|_{\mathfrak R}^{1/2}.
\]
This is the desired semi-uniform ISS estimate.
\end{proof}

\begin{remark}
The result of Theorems \ref{thm:ISS} and \ref{thm:SUGAS} can be applied for \eqref{eq:wave} in Example \ref{example:wave} as soon as a Geometric Control Condition is satisfied (see e.g., \cite{bardos1992sharp} for more details. For the 1D version of the wave equation, Theorem \ref{thm:SGES} can be applied, since the coercive condition has been proved for this case in Remark \ref{rem:lack-coercivity}. 
\end{remark}

\section{Conclusion}

In this paper, we have proved that, if the initial state belongs to some fractional Sobolev space, then both the linear and the nonlinear systems share a similar fractional regularity. This regularity analysis was needed to provide an asymptotic analysis of the trajectories, without or with disturbances.

The central message of the paper is therefore that interpolation theory provides a natural bridge between quadratic observability estimates and nonlinear dissipative feedback laws. This bridge allows one to obtain stability estimates for a large class of impedance passive systems without relying on PDE-specific multiplier techniques or on an explicit characterization of the nonlinear generator.

Many questions remain open. We state some of them:

\paragraph{Question 1:} \textit{Most of the results stated in this article are proved in the situation where $\mathcal S=U$. Is it possible to obtain results in the case where $\mathcal S\neq U$ ?}

The present work focuses on nonlinearities acting on the whole Hilbert space U. An interesting extension would be to replace the maps $\sigma$ by metric projections onto closed convex subsets of $U$ (namely, $\mathcal S$), or more generally by resolvents of maximal monotone operators. Recent developments on projected incrementally scattering passive systems \cite{singh2025projected} suggest that such an extension may be possible. Whether the fractional regularity preservation established here remains valid in this more general setting is left for future work.

\paragraph{Question 2:} \textit{Is it possible to use such techniques for other kinds of nonlinearities ? More precisely, the case of non-smooth feedback could be interesting to ensure asymptotic stability despite disturbance ?}

The question is closely related to the sliding mode control methodology, for which only a few extensions are available \cite{balogoun2022sliding}. Nevertheless, we suspect that, without changing the output space, the stability would not be global, but rather practical, as illustrated in \cite{chitour2020one,Xu2019Saturated}.

\paragraph{Question 3:} \textit{Is it possible to derive an integral ISS estimate with respect to $d$ thanks to a Lyapunov functional ?}

We have seen that the estimate given in Theorem \ref{thm:ISS} does not depend on the disturbance $d$, because $d$ is supposed to be uniformly bounded. In finite-dimension, this result has been improved by proving that the system is integral ISS with respect to $d$.\\ 

\textbf{Acknowledgment:} The author would like to thank Sylvain Ervedoza for the interpolation trick used all along the article. He would also thank Nicolas Vanspranghe for having been kind enough to introduce him to the concept of fractional Sobolev spaces (through the article \cite{paunonen2024admissibility} and also by discussing with me) and to provide me some important references about stabilization of infinite-dimensional systems with a nonlinear damping.

\label{sec:conclusion}

\bibliographystyle{plain}
\bibliography{bibsm}

%%%%%%%%%%%%%%%%%%%%%%%%%%%%%%%%%%%%%%%%%%%%%%%%%%%%%%%%%%%%%%%%%%%%%%%%%%%%%%%%%%%%%%%%%%%%%%%%%%%%%%%%%%%%%%%%%%%%%%%%%%%%%%%

\end{document}